\documentclass[journal,twoside,web]{ieeecolor}
\usepackage{generic}
\usepackage{cite}
\usepackage{amsmath,amssymb,amsfonts}
\usepackage{algorithmic}
\usepackage{graphicx}
\usepackage{textcomp}
\usepackage{cite}
\usepackage{amsmath,amssymb,amsfonts}
\usepackage{algorithm}
\usepackage{algorithmic}
\usepackage{graphicx}
\usepackage{textcomp}
\usepackage{bm}
\usepackage{mathrsfs}
\usepackage{graphicx,cite}
\usepackage{amsmath}
\usepackage{amssymb}
\usepackage[dvips]{epsfig}
\usepackage{latexsym}
\usepackage{psfrag}
\usepackage{graphicx,subfigure}
\usepackage{hyperref}
\usepackage{color}
\usepackage{amsfonts,mathrsfs,bbm,amsmath,stmaryrd,amssymb,pifont}%
\usepackage{amscd,graphicx,array,dsfont,texdraw,tikz}%
\usetikzlibrary{arrows,decorations.markings,shapes,shadows,fadings}
\usepackage{bbm,mathbbol,mathrsfs}%
\usepackage{array}
\usepackage{enumerate}
\usepackage{times}
\usepackage{ntheorem}

\newtheorem{definition}{Definition}
\newtheorem{lemma}{Lemma}
\newtheorem{theorem}{Theorem}
\newtheorem{remark}{Remark}
\newtheorem{assumption}{Assumption}
\def\dref#1{(\ref{#1})}

\allowdisplaybreaks

\def\BibTeX{{\rm B\kern-.05em{\sc i\kern-.025em b}\kern-.08em
    T\kern-.1667em\lower.7ex\hbox{E}\kern-.125emX}}
\begin{document}
\title{DRAG: Distributed Resource Allocation Games over Multiple Interacting Coalitions}
\author{Jialing Zhou, \IEEEmembership{Member, IEEE}, Guanghui Wen, \IEEEmembership{Senior Member, IEEE},  Yuezu Lv, \IEEEmembership{Member, IEEE}, Tao Yang, \IEEEmembership{Senior Member, IEEE}, Guanrong Chen, \IEEEmembership{Life Fellow, IEEE}
\thanks{J. Zhou and Y. Lv are with the Advanced Research Institute of Multidisciplinary Science, Beijing Institute of Technology, Beijing 100081, China (e{-}mail: jlzhou@bit.edu.cn, yzlv@bit.edu.cn).}
\thanks{G. Wen is with the School of Automation, Southeast University, Nanjing 211189, China (e{-}mail: wenguanghui@gmail.com).}
\thanks{T. Yang is with the State Key Laboratory of Synthetical Automation for Process Industries, Northeastern University, Shenyang 110819, China (e{-}mail: yangtao@mail.neu.edu.cn).}
	\thanks{G. Chen is with the Department of Electrical Engineering, City University of Hong Kong, Hong Kong SAR, China (e{-}mail: eegchen@cityu.edu.hk).}
}

\maketitle

\begin{abstract}
	 Despite many distributed resource allocation (DRA) algorithms have been reported in literature, it is still unknown how to allocate the resource optimally over multiple interacting coalitions.   One major challenge in solving such a problem is that, the relevance of the decision on resource allocation in a coalition to the benefit of others may lead to conflicts of interest among these coalitions. Under this context, a new type of multi-coalition game is formulated in this paper, termed as resource allocation game, where each coalition contains multiple agents that cooperate to maximize the coalition-level benefit while  subject to the resource constraint described by a coupled equality. 	Inspired by techniques such as variable replacement, gradient tracking and leader-following consensus, two new kinds of DRA algorithms are developed respectively  for  the scenarios where the individual benefit of each agent  explicitly depends on the states of   itself and some agents in other coalitions, and on the states of all the game participants. It is shown that the proposed algorithms can converge linearly to the Nash equilibrium (NE) of the multi-coalition game
  while satisfying the resource constraint during the whole NE-seeking process. Finally, the validity of the present allocation algorithms is verified by numerical simulations. 	
\end{abstract}

\begin{IEEEkeywords}
	Distributed resource allocation,  distributed NE seeking, distributed  optimization, multi-agent system, multi{-}coalition game.
\end{IEEEkeywords}

\section{Introduction}
\IEEEPARstart{T}{he} past decade has witnessed a significant progress on 
distributed resource allocation (DRA) over multi-agent networks (MANs), where interacting individual agents 
cooperate to make the best decision
on allocating the group-level resources   via information exchange among neighboring agents \cite{survey-yangtao}. 
The 
task can be basically modeled as a distributed optimization problem regarding a group-level objective function
while
subject to the resource constraint described by a coupled equality, and the problem has been extensively studied 
from various aspects with  discrete-time \cite{push-sum-yangtao,gfh-cluster-dispatch,discrete-dispatch-2014-TPWRS,discrete-dispatch-2014-TCNS,discrete-dispatch-2013-TSP,discrete-dispatch-2014-TAC,discrete-time-finite-time1} and continuous-time  \cite{Initialization-free2016,guanghui_tii,tnse-zhuyanan,ChenLiAuto2018ConDis,zjl-dispath-tnse,chengang-predefined-time-DRA2022} DRA algorithms developed. 

Considering the complex interactions of real-world networked systems, the resource allocation problems of multiple coalitions may be coupled with each other.
For example, in public finance management, when  deciding the allocation of a provincial government's revenue fund for economic development, the influence of other provinces' economic development would be taken into account, since cooperation and competition may exist across provinces.
%
In such cases, 
the DRA problem of multiple coalitions cannot be decoupled into several independent single-coalition DRA problems and solved separately by employing existing DRA algorithms.

Inspired by the above observations,
a new model is formulated in this paper for the resource allocation problem of multiple interacting coalitions.
In this model, the inputs of each individual agent's objective function may include the states of  agents not only within but also outside the coalition.
The group-level objective function of each coalition is the sum of objective functions of all the individual agents therein. In each coalition, the individual agents cooperate to minimize the group-level objective function while subject to the resource constraint described by a coupled equality.

The proposed model can be viewed as a new type of multi-coalition game, as it shares the core feature of
capturing the cooperation of agents that belongs to the same coalition and the conflicts of interest among different coalitions.
Existing studies on multi-coalition games can be found in \cite{two-network,two-network-switching,multi-cluster-ymj2018,Pang2022Auto,nianxiaohong2021,ZENG2019,mengmin2020,jialing2022arxiv-multi-coalition} and the reference therein, which can be generally classified into two categories according to whether or not intra-coalition consistency constraints are involved.
Specifically, the  agent states in each coalition are free of coupled constraints in one category of studies \cite{multi-cluster-ymj2018,Pang2022Auto,nianxiaohong2021},  while in the other, are demand to reach an agreement \cite{two-network,two-network-switching,ZENG2019,mengmin2020,jialing2022arxiv-multi-coalition}.

In the  seminal work on  games of MANs with coalition structure  \cite{two-network},  a two-network zero-sum game is formulated, where the agents within each network should agree on a common network state, and the two networks have opposite objective regarding
the optimization of a common function of the network states;  for this model,
distributed Nash equilibrium (NE) seeking algorithms  are developed under fixed and switching topologies respectively in \cite{two-network} and \cite{two-network-switching}. Then, the work is extended to non-cooperative games among multiple coalitions in \cite{multi-cluster-ymj2018} with the intra-coalition demand of state consistency  removed, and a continuous-time NE computation algorithm is designed based on gradient play and average consensus protocol. Along this line, directed and switching topologies are further considered in \cite{nianxiaohong2021}, and discrete-time gradient-free algorithm design for the case with unknown expressions of objective functions is studied in \cite{Pang2022Auto}.
For multi-coalition games with intra-coalition consistency constraints, a generalized nonsmooth distributed NE seeking algorithm is designed with continuous-time setting in \cite{ZENG2019}; while discrete-time algorithms are developed under undirected and directed network topologies in \cite{mengmin2020} and \cite{jialing2022arxiv-multi-coalition} respectively. It is worth noting that,
although  multi-coalition games have been studied with several NE seeking algorithms developed, research on the proposed multi-coalition game  with coupled equality constraint in the context of resource allocation  has not been reported yet.



In this paper, we propose a new model of multi-coalition game for the study of DRA over multiple interacting coalitions.
For the case that the individual benefit of each agent is explicitly influenced by the states of itself and some agents in other coalitions,   a new kind of DRA algorithm is designed based upon the techniques of variable replacement, gradient descent, and leader-following consensus.  Then, the more general case is further investigated by redesigning the proposed DRA algorithm based on the gradient tracking technique,  where the individual benefit of each agent is allowed to depend explicitly on the states of all game participants.  The proposed algorithms are theoretically proven  to converge linearly to the NE of the proposed game while meet the equality constraints during the iterations.

%

The main contribution of this paper lies in the following aspects: (i) A new model of multi-coalition game is proposed, which captures the  cooperation of individual agents on resource allocation in each coalition as well as the conflicts of interest among different coalitions. Our model includes the commonly studied mathematical model for DRA  problem over MANs as a special case where all the agents  are assumed to be cooperative. %
(ii) Two new kinds of DRA algorithms are designed and utilized such that the decisions of the agents can converge linearly to the NE of the considered resource allocation game. The methodology developed in this paper generalizes the existing results on DRA over MANs, as the developed algorithms could deal with the DRA problem in the presence of conflicts of interests among different coalitions.
(iii)  Another distinguished feature of the proposed algorithms is that the resource constraints can be guaranteed at each iteration, which enables the proposed algorithms to be executed in an online manner. Such a feature plays an important role in online solving various DRA problems or their variations such as the distributed economic dispatch problem of smart grid with multiple generating units subject to the constraint of supply-demand balance.


The remainder of the paper is summarized as follows. In Section \ref{sec.pro},  the model of the  game is formulated and  the  property of the NE is analyzed. In Section \ref{sec.special case} and \ref{sec.general case}, DRA algorithms are developed for the special and general cases of the model, respectively.  Numerical examples are provided in Section \ref{sec.simulation} to verify the effectiveness of the proposed algorithms, and finally Section \ref{sec.conclusion} concludes the paper.

\textbf{Notations}.  The sets of  natural numbers, positive integers and real numbers are respectively represented by  $\mathbb{N}$ and $\mathbb{N}^+$ and $\mathbb{R}$. The set of $n${-}dimensional real column vectors is denoted by $\mathbb{R}^n$.  $I_n$ and $\bm{1}_n$ are respectively   the $n${-}dimensional identity matrix and the $n${-}dimensional column vector with all the entries being $1$.  Symbol ${\otimes}$ is the Kronecker product and $\|\cdot\|$ denotes the Euclidian norm.
$\mathrm{diag}\{B_1,\cdots,B_n\}$ represents the diagonal block matrix with the matrix $B_i~(i=1,\cdots,n)$ on the $i$th  diagonal block.

\section{Problem Statement}\label{sec.pro}

\subsection{Game Formulation}\label{sec.game formulation}
In this paper, we consider a class of DRA problems  of multiple interacting coalitions indexed by $i\in\mathcal{I}{=}\{1,\cdots,N\}$, where $N\in\mathbb{N}^{+}$ denotes the number of coalitions.  
Let $\mathcal{V}_i=\{ij| j=1,\cdots,n_i\}$ be the agent set of coalition $i$, with $ij$ representing the $j$th member in coalition $i$ and  $n_i\in\mathbb{N}^{+}$ denoting the number of the coalition members. Denote the total number of the agents in these coalitions  by $n_{\mathrm{sum}}=\sum_{i=1}^Nn_i$  and  the agent set of the problem by $\mathcal{V}=\mathcal{V}_1\cup\cdots\cup\mathcal{V}_N$. The underlying communication topology among these individual agents is described by an undirected graph $\mathcal{G}(\mathcal{V},\mathcal{E})$.
Each agent $ij\in\mathcal{V}$ possesses some local resource, denoted $R_{ij}\in\mathbb{R}$.
For each coalition $i\in\mathcal{I}$, the members are required to cooperatively make the best decision of re{-}allocating the coalition resource, denoted $R_i=\sum_{ij\in\mathcal{V}_i}R_{ij}$, with the goal of achieving maximal coalition{-}level benefit.
Denote the decision (state) of agent $ij$ by $x_{ij}\in\mathbb{R}$ and the collective decision (state) of coalition $i$ by $\bm{x}_i=[x_{i1},x_{i2},\cdots,x_{in_i}]^T\in\mathbb{R}^{n_i}$. Define $\bm{x}=[\bm{x}_1^T,\cdots,\bm{x}_N^T]^T\in\mathbb{R}^{n_\mathrm{sum}}$. 
If the coalition{-}level benefit of each coalition $i$ is affected by only the collective decision of its members, i.e., $\bm{x}_i$, then, the DRA of the $N$ coalitions can be separated into $N$ independent well{-}studied multi{-}agent distributed optimization problem.
However, in more general competitive situations, the benefit of each coalition may be influenced by  the decisions of not only its members but also the agents outside this coalition, and the conflicts of interests among the coalitions make it necessary to investigate this problem from a game-theoretic perspective. Within this context,
we formulate the resource allocation game as follows:
\begin{equation}\label{eq.problem1}
\begin{aligned}
&\min_{{x}_{ij}}f_i(\bm{x})=\min_{{x}_{ij}}\sum_{l=1}^{n_i}f_{il}({\bm{x}}),~\forall ij\in\mathcal{V},
\\
&\textit{s.t.}\quad \sum_{ij\in\mathcal{V}_i}{x}_{ij}=R_i,
\end{aligned}
\end{equation}
where $f_{il}:\mathbb{R}^{n_{\mathrm{sum}}}\rightarrow\mathbb{R}$ and $f_i:\mathbb{R}^{n_{\mathrm{sum}}}\rightarrow\mathbb{R}$  are respectively the objective functions of agent $il$ and coalition $i$, and other symbols have been defined previously. Here, we consider the minimization setting without loss of generality, since the case of welfare maximization can be easily transformed into a minimization problem.
The study of this model has potential applications in the fields of economics and engineering.

\textit{Example 1 (Business Budget Allocation)}:
Consider that multiple firms, each of which has several product lines, manufacture related products in a competitive market.
The revenue a product line generates will be influenced by the budgets assigned to the product line well as other homogenous product lines.
Each firm wants to efficiently and effectively use its resource to maximize its total revenue. Such a problem can be modeled as the resource allocation game (\ref{eq.problem1}). $\hfill\blacksquare$



In (\ref{eq.problem1}), the individual agent benefit is expressed by a function of the decisions of all the game participants, i.e., $f_{ij}(\bm{x})$. Note that in some situations, the individual agent benefit can be  formulated as a function of the decisions of only the agent itself and the agents in other coalitions, i.e., $f_{ij}(x_{ij},\bm{x}_{{-}i})$,  where  $\bm{x}_{{-}i}=[\bm{x}_{1}^T,\cdots,\bm{x}_{i{-}1}^T,\bm{x}_{i{+}1}^T,\cdots,\bm{x}_{N}^T]^T$, and the resource allocation game becomes
\begin{equation}\label{eq.problem2}
\begin{aligned}
&\min_{{x}_{ij}}f_i(\bm{x})=\min_{{x}_{ij}}\sum_{l=1}^{n_i}f_{il}(x_{il },{\bm{x}_{{-}i}}),~\forall ij\in\mathcal{V},
\\
&\textit{s.t.}\quad \sum_{ij\in\mathcal{V}_i}{x}_{ij}=R_i.
\end{aligned}
\end{equation}
Obviously, model (\ref{eq.problem2}) is a \textit{special case} of (\ref{eq.problem1}), and an example is given as follows.


\textit{Example 1 revisited (Business Budget Allocation)}:
Consider the problem of business budget allocation in Example 1. In each firm, the product lines are heterogeneous, whose revenues do not explicitly depend on the budgets for other product lines in this firm, but only rely on the budgets for themselves and the homogeneous product lines in other firms. Such a problem can be modeled as the resource allocation game (\ref{eq.problem2}). $\hfill\blacksquare$
Next, the definition of NE will be introduced.  Let $(\bm{x}_{i},\bm{x}_{{-}i})\triangleq \bm{x}$
for notational brevity. For each  $i\in\mathcal{I}$,  define the admissible set of the coalition decision  as $\Omega_i=\{\bm{x}_i\in\mathbb{R}^{n_i}|\bm{1}_{n_i}^T\bm{x}_i{=}R_i\}$ and define 
$\Omega{=}\prod_{i=1}^N\Omega_i.$
\begin{definition} An  NE of the resource allocation game (\ref{eq.problem1}) (game (\ref{eq.problem2}))
is a vector $\bm{x}^*=(\bm{x}_{i}^{*},\bm{x}_{{-}i}^*)\in\Omega$ with the property that  $\forall i\in\mathcal{I}$ :
	$$f_i(\bm{x}_{i}^{*},\bm{x}_{{-}i}^*){\leq}f_i(\bm{x}_{i},\bm{x}_{{-}i}^*),~ \forall \bm{x}_i\in\Omega_i.$$
\end{definition}

Define the pseudo gradient function $\mathcal{P}:\mathbb{R}^{n_{\mathrm{sum}}}\rightarrow\mathbb{R}^{n_{\mathrm{sum}}}$ as
$$\mathcal{P}(\cdot)=[(\frac{\partial f_1}{\partial \bm{x}_1}(\cdot))^T,(\frac{\partial f_2}{\partial \bm{x}_2}(\cdot))^T,\cdots,(\frac{\partial f_N}{\partial \bm{x}_N}(\cdot))^T]^T.$$ 
In this paper, the objective functions of all the agents in game (\ref{eq.problem1}) and (\ref{eq.problem2}) are assumed to satisfy the following assumptions.

\begin{assumption}\label{assp.fij.lipschitz}
	For each agent $ij\in\mathcal{V}$, 
	the  objective function $f_{ij}(\cdot)$ is convex and continuously differentiable. Moreover, $\nabla f_{ij}(\cdot)$ is Lipschitz with the constant $l_{ij}$.
\end{assumption}
Under Assumption 1, it is not difficult to derive that $\nabla f_{i}(\cdot)$ is Lipschitz with a constant $l_i=\sum_{j=1}^{n_i}l_{ij}$, i.e., $\|\nabla f_{i}(\bm{a}_1){{-}}\nabla f_{i}(\bm{a}_2)\|{\leq}l_{i}\|\bm{a}_1{-}\bm{a}_2\|,\forall\bm{a}_1,\bm{a}_2\in\mathbb{R}^{n_\mathrm{sum}}$, which is useful for the forthcoming  algorithm design and the convergence analysis. 


\begin{assumption}\label{assp.pseudo}(Strictly Monotone Pseudo{-}gradient)
	$(\bm {a}_1{{-}} \bm{a}_2)^T (\mathcal{P}(\bm {a}_1){{-}}\mathcal{P}(\bm {a}_2)){\geq} \mu\|\bm {a}_1{{-}}\bm {a}_2\|^2,\forall\bm{a}_1,\bm{a}_2\in\mathbb{R}^{n_\mathrm{sum}}$, where ${\mu}$ is a positive constant.
\end{assumption}
Assumptions 1 and 2 are quite common in the studies of distributed NE computation \cite{multi-cluster-ymj2018,nianxiaohong2021,Pang2022Auto,mengmin2020,jialing2022arxiv-multi-coalition}, which together ensure the  existence and the uniqueness of  NE in games (\ref{eq.problem1}) and (\ref{eq.problem2}).




\subsection{Network Topology and the Associated Matrices}\label{sec.topology and matices}

The underlying network topology among the $n_{\mathrm{sum}}$ game participants is depicted by an  undirected graph $\mathcal{G}(\mathcal{V},\mathcal{E})$ with $\mathcal{V}$ and $\mathcal{E}\subseteq \mathcal{V}\times \mathcal{V}$ respectively denoting  the node (agent) set and the edge (communication link) set. A pair $(ij,pq)\in\mathcal{E}$ is an edge of $\mathcal{G}$ if agent $pq$ can receive information from agent $ij$.  If  $(ij,pq)\in\mathcal{E}$, then agent $ij$ is called a \textit{neighbor} of agent $pq$. The graph $\mathcal{G}$ is assumed to be undirected, i.e., for any $(ij,pq)\in\mathcal{E}$, $(pq,ij)\in\mathcal{E}$.
A  path from agent $i_1 j_1$ to agent $i_l j_l$ is
a sequence of edges $(i_mj_m,i_{m{+}1}j_{m{+}1})\in\mathcal{E},m=1,\cdots,l{-}1$.
The undirected graph $\mathcal{G}$ is called connected if for any agent, there exist paths to all
other agents.
Define  the induced subgraph  $\mathcal{G}_i(\mathcal{V}_i,\mathcal{E}_i)$ with the node set $\mathcal{V}_i$ and the edge set $\mathcal{E}_i=\{(ij,il)|(ij,il)\in\mathcal{E}\}\subseteq \mathcal{V}_i\times \mathcal{V}_i$. Obviously, $\mathcal{G}_i$ characterizes the underlying topology among the agents in coalition $i$.
For each agent $ij\in\mathcal{V}$, define the neighbor set $\mathcal{N}_{ij}=\{lm|(lm,ij)\in\mathcal{E}\}$,  the intra{-}coalition neighbor set $\mathcal{N}_{ij}^{i}=\{il|(il,ij)\in\mathcal{E}_i\}$, the degree $d_{ij}=|\mathcal{N}_{ij}|$ and the intra-coalition degree $d_{ij}^{i}=|\mathcal{N}_{ij}^i|$.

The adjacency matrix of graph $\mathcal{G}$ is defined as $A=[a_{ij}^{pq}]_{n_{\mathrm{sum}}\times n_{\mathrm{sum}}}$  with $a_{ij}^{ij}=0$, and
$a_{ij}^{pq}=1$ if $(pq,ij)\in\mathcal{E}$ and $0$ otherwise, where $a_{ij}^{pq}$ denotes the element of $A$ on the $(\sum_{k=1}^{i{-}1}n_k{+}j)$th row and the $(\sum_{k=1}^{p{-}1}n_k{+}q)$th column. Similarly, the adjacency matrix of the subgraph  $\mathcal{G}_i~(i\in\mathcal{I})$ is defined by $A_i=[a_{ij}^{il}]_{{n_i}\times n_i}$ with $a_{ij}^{il}$ denoting the $(j,l)${-}entry of $A_i$. Obviously,  $A_1,\cdots,A_N$ are the diagonal blocks of $A$.
The Laplacian matrix of  $\mathcal{G}_i$ is defined as $L_i=[l_{ij}^{il}]_{n_i\times n_i}$ with
$l_{ij}^{ij}=\sum_{l=1}^{n_i}a_{ij}^{il}$ and $l_{ij}^{il}={-}a_{ij}^{il},~j\neq l$, where $l_{ij}^{il}$ denotes the   $(j,l)${-}entry of $L_i$.

Apart from the above matrices, a weighted adjacency matrix of the graph $\mathcal{G}$ is further defined as  $W=[w_{ij}^{pq}]_{n_{\mathrm{sum}}\times n_{\mathrm{sum}}}$  with $w_{ij}^{ij}=0$,
$w_{ij}^{pq}>0$ if $(pq,ij)\in\mathcal{E}$ and $0$ otherwise, and $\forall ij\in\mathcal{V}$, $\sum_{pq\in\mathcal{V}} w_{ij}^{pq} {+}\max_{pq\in\mathcal{V}} \{w_{ij}^{pq}\}<1  $ (similar to the  superscripts and subscripts in entries of the adjacency matrix  $A$, $w_{ij}^{pq}$ denotes the element of $W$ on the $(\sum_{k=1}^{i{-}1}n_k{+}j)$th row and the $(\sum_{k=1}^{p{-}1}n_k{+}q)$th column).  For example, the entries $w_{ij}^{pq}~\forall ij,pq\in\mathcal{V}$ can be set as
$
w_{ij}^{pq}={a_{ij}^{pq}}/{h_{ij}}~$, where $h_{ij}>d_{ij}+\max_{pq\in\mathcal{V}}\{a_{ij}^{pq}\}$ is a constant.

For each coalition $i$, a doubly-stochastic  matrix associated with graph $\mathcal{G}_i$ is defined as  $C_i=[c_{ij}^{im}]_{n_{i}\times n_{i}}$  with  $c_{ij}^{im}>0$ if $im\in\mathcal{N}_{ij}^i\cup \{ij\}$ and $c_{ij}^{im}=0$ otherwise. For example,
the entries of $C_i$ can be set as
$c_{ij}^{ij}=1-{d_{ij}^i}/{n_i}$ and $
c_{ij}^{im}={a_{ij}^{im}}/{n_i},\forall im\neq ij$.

\begin{assumption}\label{assp.graph}
	The graph $\mathcal{G}$ is undirected and connected, and all the sub{-}graphs $\mathcal{G}_i(i\in\mathcal{I})$ are undirected and connected.
\end{assumption}

\subsection{Design Objective}
In the previous subsections,
the resource allocation problem over multiple interacting coalitions has been formulated
as a multi-coalition game, 
and the communication topology among the individual agents in these coalitions have also been described. Specifically,
each individual agent $ij\in\mathcal{V}$ is aware of its own decision value $x_{ij}$ and objective function $f_{ij}$, and it can share the local information with its neighbors through the communication network.

Next, 
distributed NE seeking algorithms will be developed for the  proposed resource allocation games  (\ref{eq.problem1}) and (\ref{eq.problem2}) (i.e., \textit{the general and special cases}). The design objective is to make the collective agent decision $\bm{x}$ converge to the NE $\bm{x}^*$ of the proposed games, with the information utilization adapting to  the network topology $\mathcal{G}$.

To proceed, we illustrate  a property of the NE of game (\ref{eq.problem1}) (game (\ref{eq.problem2})) in the following lemma, which is helpful for the NE seeking algorithm design.

\begin{lemma}\label{lemma.NE}
	Suppose Assumptions 1-3 hold. A vector
	$\bm{x}^*$ is the NE of game (\ref{eq.problem1}) (game (\ref{eq.problem2})) if and only if $\bm{x}^*$  satisfies
	$$L_i\frac{\partial f_i}{\partial \bm{x}_i}(\bm{x}^*)=0,	~\forall i\in\mathcal{I}.$$
\end{lemma}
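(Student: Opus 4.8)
The plan is to prove the equivalence through three facts: the per-coalition decoupling of the equilibrium condition, the first-order characterization of constrained convex minimization over the affine set $\Omega_i$, and the null-space structure of a connected graph Laplacian. First I would observe that, by Definition 1, a feasible point $\bm{x}^*\in\Omega$ is an NE if and only if, for every $i\in\mathcal{I}$, the block $\bm{x}_i^*$ minimizes the map $\bm{x}_i\mapsto f_i(\bm{x}_i,\bm{x}_{{-}i}^*)$ over $\Omega_i=\{\bm{x}_i:\bm{1}_{n_i}^T\bm{x}_i=R_i\}$ with the rival blocks frozen at $\bm{x}_{{-}i}^*$. Under Assumption 1 each $f_{il}$ is convex and continuously differentiable, hence $f_i=\sum_l f_{il}$ is convex and continuously differentiable in $\bm{x}_i$; the same holds for game \dref{eq.problem2}, whose summands $f_{il}(x_{il},\bm{x}_{{-}i})$ are separable and convex in $\bm{x}_i$. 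Since $\Omega_i$ is a nonempty closed convex (affine) set, the first-order variational inequality is both necessary and sufficient for this minimization:
$$\Big(\tfrac{\partial f_i}{\partial \bm{x}_i}(\bm{x}^*)\Big)^T(\bm{x}_i-\bm{x}_i^*)\geq 0,\quad\forall\bm{x}_i\in\Omega_i.$$

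Next I would convert this one-sided inequality into an equality using the affine structure of $\Omega_i$. Every feasible displacement $\bm{d}=\bm{x}_i-\bm{x}_i^*$ lies in the direction space $\mathcal{D}_i=\{\bm{d}\in\mathbb{R}^{n_i}:\bm{1}_{n_i}^T\bm{d}=0\}$, and because $\mathcal{D}_i$ is a subspace both $\bm{d}$ and $-\bm{d}$ are admissible; the variational inequality therefore forces $(\partial f_i/\partial\bm{x}_i(\bm{x}^*))^T\bm{d}=0$ for all $\bm{d}\in\mathcal{D}_i$. Equivalently, $\partial f_i/\partial\bm{x}_i(\bm{x}^*)$ is orthogonal to $\mathcal{D}_i$, so it lies in $\mathcal{D}_i^\perp=\mathrm{span}\{\bm{1}_{n_i}\}$, i.e., there is a scalar $\lambda_i$ with $\partial f_i/\partial\bm{x}_i(\bm{x}^*)=\lambda_i\bm{1}_{n_i}$.

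Finally I would invoke the spectral property of the Laplacian. Under Assumption 3 the subgraph $\mathcal{G}_i$ is undirected and connected, so $L_i$ is symmetric positive semidefinite with $L_i\bm{1}_{n_i}=0$ and $\ker L_i=\mathrm{span}\{\bm{1}_{n_i}\}$; consequently $\partial f_i/\partial\bm{x}_i(\bm{x}^*)\in\mathrm{span}\{\bm{1}_{n_i}\}$ holds if and only if $L_i\,\partial f_i/\partial\bm{x}_i(\bm{x}^*)=0$, which closes the chain of equivalences. The argument is carried out uniformly for games \dref{eq.problem1} and \dref{eq.problem2}, since only convexity and differentiability of $f_i$ in $\bm{x}_i$ are used. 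The step I expect to be most delicate, and would write most carefully, is the passage from the one-sided optimality inequality to the two-sided orthogonality condition: this is exactly where the affine (rather than merely convex) nature of $\Omega_i$ is essential, and where the feasibility $\bm{x}^*\in\Omega$ — part of the NE definition, and thus the standing hypothesis on $\bm{x}^*$ needed in the backward direction — must be kept in force so that the displacements $\bm{d}$ genuinely remain in $\mathcal{D}_i$.
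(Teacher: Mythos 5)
Your proof is correct and takes essentially the same route as the paper: both reduce the NE condition to the per-coalition stationarity-with-multiplier condition $\frac{\partial f_i}{\partial \bm{x}_i}(\bm{x}^*)=\lambda_i\bm{1}_{n_i}$ and then pass to $L_i\frac{\partial f_i}{\partial \bm{x}_i}(\bm{x}^*)=0$ via $\ker L_i=\mathrm{span}\{\bm{1}_{n_i}\}$ for the connected subgraph $\mathcal{G}_i$. The only difference is presentational: where the paper cites the KKT theorem of \cite{KKTbook}, you derive the multiplier condition elementarily from the variational inequality over the affine set $\Omega_i$, and you make explicit both the final kernel equivalence and the feasibility $\bm{x}^*\in\Omega$ needed in the backward direction, which the paper leaves implicit.
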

\begin{proof}
	Define the Hamilton function for each coalition
	$	H_i(\bm{x}_i,\lambda_i)=f_i(\bm{x}){+}\lambda_i(\bm{1}_{n_i}^T\bm{x}_i{-}R_i)$,
	where $\lambda_i\in\mathbb{R}$ is the Lagrange multiplier. Note that Assumptions 1-3 hold. Then, from the well-known Karush-Kuhn-Tucker (KKT) optimality condition \cite{KKTbook}, one can obtain that, $\bm{x}^*$ is the NE of game (\ref{eq.problem1}) (game (\ref{eq.problem2})) if and only if there exist a ${\lambda}_i$ that satisfies
	$\frac{\partial H_i}{\partial \bm{x}_{i}}(\bm{x}^*,\lambda_i)=\frac{\partial f_i}{\partial \bm{x}_{i}}(\bm{x}^*){+}{\lambda}_i\bm{1}_{n_i}=0,~\forall i\in\mathcal{I},$ which is further equivalent to  $L_i\frac{\partial f_i}{\partial \bm{x}_i}(\bm{x}^*)=0,	~\forall i\in\mathcal{I}.$
\end{proof}

\section{Distributed NE computation for the special case}\label{sec.special case}
Intuitively, the distributed NE computation design for \textit{the special case} described by model (\ref{eq.problem2}) is simpler than that for \textit{the general case} described by model (\ref{eq.problem1}), since less relevance of the agents' decisions to the individual objectives are involved in the former.
Therefore, we will first study the distributed NE computation for \textit{the special case} of the proposed resource allocation game (model (\ref{eq.problem2})). Based on the results in this section, the issue for \textit{the general case} of the proposed resource allocation game (model (\ref{eq.problem1})) will be investigated in the next section.

\subsection{Algorithm Design}
To make the collective agent state converge to  
the NE in game (\ref{eq.problem2}), the following distributed algorithm is designed for each agent $ ij\in\mathcal{V}$ $\forall k\in\mathbb{N}$:
\begin{subequations}\label{eq.law.1}
\begin{align}
x_{ij}(k)=&x_{ij}(0){{-}}\sum_{im\in\mathcal{N}_{ij}^{i}} (\eta_{ij}(k){{-}}\eta_{im}(k)),\label{eq.law.x.ij}\\
{\eta}_{ij}(k{{+}}1)=&{\eta}_{ij}(k){+}{\alpha}\sum_{im\in\mathcal{N}_{ij}^{i}} \bigg(\frac{\partial f_{ij}(\xi_{ij}^{ij}(k),\bm {\xi}_{ij}^{{-}i}(k))}{\partial {x}_{ij}}\nonumber\\
&{-}\frac{\partial f_{im}(\xi_{im}^{im}(k),\bm{\xi}_{im}^{{-}i}(k))}{\partial {x}_{im}}\bigg),\label{eq.law.eta.ij}\\
\xi_{ij}^{pq}(k{{+}}1)=&\bar{w}_{ij}^{pq}\xi_{ij}^ {pq}(k){{+}}\sum\limits_{lm\in\mathcal{N}_{ij}}w_{ij}^{lm}\xi_{lm}^{pq}(k)\nonumber\\
&{+}w_{ij}^{pq}x_{pq}(k),~~~\forall pq\in\mathcal{V},\label{eq.law.xi.ijpq}
\end{align}
\end{subequations}
where $x_{ij}(0)=R_{ij}$, $\eta_{ij}(0)=0$, $\alpha$ is a small positive constant to be determined, $\xi_{ij}^{pq}$ is an auxiliary variable computed by agent $ij$ for estimating the value of $x_{pq}$,  $\bm{\xi}_{ij}^l=[\bm{\xi}_{ij}^{l1},\cdots,\bm{\xi}_{ij}^{ln_l}]^T,\forall l\in\mathcal{I}$,  $\bm{\xi}_{ij}^{{-}i}=[(\bm{\xi}_{ij}^1)^T,\cdots,(\bm{\xi}_{ij}^{i{-}1})^T,(\bm{\xi}_{ij}^{i{+}1})^T,\cdots,(\bm{\xi}_{ij}^{N})^T]^T$, and $\bar{w}_{ij}^{pq}=1{-}\sum_{lm\in\mathcal{N}_{ij}}w_{ij}^{lm}{-}{w}_{ij}^{pq}$. 
Note from the definition of parameters $w_{ij}^{pq},\forall ij,pq\in\mathcal{V}$ in Sec. \ref{sec.topology and matices} that $\bar{w}_{ij}^{pq}>0,\forall ij,pq\in\mathcal{V}$.

To rewrite the proposed algorithm (\ref{eq.law.1}) in a compact form, we
define the following vectors
\begin{equation*}
\begin{aligned}
&\bm{\eta}_{i}=[{\eta}_{i1},\cdots,{\eta}_{in_i}]^T\in\mathbb{R}^{n_i},\\
&\bm{\eta}=[(\bm{\eta}_{1})^T,(\bm{\eta}_{2})^T,\cdots,(\bm{\eta}_{N})^T]^T\in\mathbb{R}^{n_{\mathrm{sum}}},\\
&\bm{\xi}_{ij}=[\xi_{ij}^{11},\xi_{ij}^{12},{\cdots},\xi_{ij}^{1{n_1}},\xi_{ij}^{21},{\cdots},\xi_{ij}^{2{n_2}},\cdots,\xi_{ij}^{N{n_N}}]\in\mathbb{R}^{n_{\mathrm{sum}}},\\
&\bm{\xi}_i=[(\bm{\xi}_{i1})^T,(\bm{\xi}_{i2})^T,\cdots,(\bm{\xi}_{i{n_i}})^T]^T\in\mathbb{R}^{n_in_{\mathrm{sum}}},\\
&\bm{\xi}=[(\bm{\xi}_{1})^T,(\bm{\xi}_{2})^T,\cdots,(\bm{\xi}_{N})^T]^T\in\mathbb{R}^{n_{\mathrm{sum}}^2},
\end{aligned}
\end{equation*}
and
the function $\breve{\mathcal{P}}_i:\mathbb{R}^{n_i^2}\rightarrow\mathbb{R}^{n_i}$:
\begin{equation*}
\begin{aligned}
\breve{\mathcal{P}}_i(\bm{\xi}_i)
=&[\frac{\partial f_{i}}{\partial x_{i1}}(\bm{\xi}_{i1}),\frac{\partial f_{i}}{\partial x_{i2}}(\bm{\xi}_{i2}),{\cdots},\frac{\partial f_{i}}{\partial x_{i{n_i}}}(\bm{\xi}_{in_i})]^T\in\mathbb{R}^{n_i}.
\end{aligned}
\end{equation*}
Obviously, $\breve{\mathcal{P}}_i(\bm{1}_{n_i}{\otimes} \bm{x})=\frac{\partial f_i}{\partial \bm{x}_i}(\bm{x})$.
Note that for each agent  in game (\ref{eq.problem2}), the inputs of individual objective function include the decisions of only itself and the agents in other coalitions. Therefore,
one has $\forall ij\in\mathcal{V}, \forall \bm{x}\in\mathbb{R}^{n_\mathrm{sum}}$
\begin{equation}\label{relation}
\frac{\partial f_i}{\partial x_{ij}}(\bm{x})=\sum_{l=1}^{n_i}\frac{\partial f_{il}}{\partial x_{ij}}(x_{il},\bm{x}_{{-}i})=\frac{\partial f_{ij}}{\partial x_{ij}}(x_{ij},\bm{x}_{{-}i}).
\end{equation}
Then, the proposed algorithm (\ref{eq.law.1})
can be  rewritten in the following compact form $\forall i\in\mathcal{I}$:
\begin{subequations}\label{eq.law.1.compact}
\begin{align}
\bm{x}_i(t)=&\bm{x}_i(0){{-}}L_i\bm{\eta}_i(k),\label{eq.law.x.i}\\
{\bm{\eta}}_i(k{{+}}1)=&{\bm{\eta}}_i(k){+}\alpha L_i
\breve{\mathcal{P}}_i(\bm{\xi}_i({k})),\label{eq.law.eta.i}\\
\bm{\xi}(k{{+}}1)
=&(W{\otimes} I_{n_\mathrm{sum}}{+}{\bar{W}})\bm{\xi}(k){{+}}\hat{W}\big(\bm{1}_{n_\mathrm{sum}}{{\otimes}}\bm{x}(k)\big),\label{eq.law.xi}
\end{align}
\end{subequations}
where $W$ has been defined  in Sec. \ref{sec.topology and matices},
${\bar{W}}=\mathrm{diag}\{\bar{w}_{11}^{11},\cdots,\bar{w}_{11}^{Nn_N},\bar{w}_{12}^{11},\cdots,\bar{w}_{12}^{Nn_N},\cdots,\bar{w}_{Nn_N}^{Nn_N}\}$,
$\hat{W}=\mathrm{diag}\{w_{11}^{11},\cdots,w_{11}^{Nn_N},w_{12}^{11},\cdots,w_{12}^{Nn_N},\cdots,w_{Nn_N}^{Nn_N}\}$, and (\ref{eq.law.eta.i}) is obtained by using (\ref{relation}).

\begin{remark}
	In the forthcoming convergence analysis of the proposed algorithm, we will show that
	(\ref{eq.law.x.ij}) ensures the satisfaction of the  equality constraint during the whole process of NE seeking. The design of (\ref{eq.law.x.ij}) is  inspired by the distributed optimization algorithms for the resource allocation over a single coalition in some existing literature, e.g., \cite{ChenLiAuto2018ConDis,zjl-dispath-tnse}. Under (\ref{eq.law.x.ij}), the primal problem of finding the NE regrading the objective functions of the variable $\bm{x}$ subject to the equality constraints, can be converted to a problem regarding the composite functions of the variable $\bm{\eta}$ without any constraint. Noticing this, after the variable replacement of $\bm{x}$ by $\bm{\eta}$, (\ref{eq.law.eta.ij}) can be viewed as a pseudo-gradient descent law for the equivalent problem, with the collective state $\bm{x}$ estimated by the leader-following consensus protocol (\ref{eq.law.xi.ijpq}). The consensus-based state estimation is quite common in distributed NE seeking,  since the collective state is required in the iteration of each agent, while the information acquisition is subject to the communication topology $\mathcal{G}$. If there is only one coalition in the problem (\ref{eq.problem2}), then, state estimation (\ref{eq.law.xi.ijpq}) is no longer required,
	and the proposed algorithm will degenerate into a single-coalition DRA algorithm with linear convergence, which has a similar structure to the DRA algorithms in \cite{ChenLiAuto2018ConDis,zjl-dispath-tnse}.	

\end{remark}

\subsection{Convergence Analysis}

From (\ref{eq.law.x.i}) and (\ref{eq.law.eta.i}), one can get
\begin{equation}\label{eq.x.i.k{+}1.k}
\bm{x}_i(k{{+}}1){-}\bm{x}_i(k)={{-}}\alpha L_i^2\breve{\mathcal{P}}_i(\bm{\xi}_i({k})),
\end{equation}
which can be further rewritten as
\begin{equation}\label{eq.x.k{+}1.k}
\bm{x}(k{{+}}1){-}\bm{x}(k)={{-}}\alpha\hat{L}^2\breve{\mathcal{P}}(\bm{\xi}({k})),
\end{equation}
where $\hat{L}=\mathrm{diag}\{L_1,\cdots,L_N\}$ and $\breve{\mathcal{P}}(\bm{\xi})=[\breve{\mathcal{P}}_1^T(\bm{\xi}_1),\cdots,\breve{\mathcal{P}}_N^T(\bm{\xi}_N)]^T$.

Define the estimation errors
\begin{equation}\label{eq.exi.def}
\bm{e_\xi}{=}\bm{\xi}{{-}}\bm{1}_{n_\mathrm{sum}}{{\otimes}} \bm{x}.
\end{equation}
Combining  (\ref{eq.law.xi}), (\ref{eq.x.k{+}1.k}) and (\ref{eq.exi.def}),
one can derive that
\begin{equation}\label{eq.e_xi.k{+}1.k}
\begin{aligned}
&\bm{e_\xi}(k{{+}}1)\\
{=}&(W{\otimes} I_{n_\mathrm{sum}}{+}{\bar{W}})\bm{\xi}(k){{+}}\hat{W}\big(\bm{1}_{n_\mathrm{sum}}{{\otimes}}\bm{x}(k)\big){{-}}\bm{1}_{n_{\mathrm{sum}}}{{\otimes}}\bm{x}(k{{+}}1) \\
{=}&(W{\otimes} I_{n_\mathrm{sum}}{+}{\bar{W}})(\bm{\xi}(k){-}\bm{1}_{n_\mathrm{sum}}{\otimes} \bm{x}(k))\\
&{{+}}(W{\otimes} I_{n_\mathrm{sum}}{+}{\bar{W}}{+}\hat{W})\big(\bm{1}_{n_\mathrm{sum}}{{\otimes}}\bm{x}(k)\big){{-}}\bm{1}_{n_{\mathrm{sum}}}{{\otimes}}\bm{x}(k{{+}}1) \\
{=}&(W{\otimes} I_{n_\mathrm{sum}}{+}{\bar{W}})\bm{e_\xi}(k){{-}}\bm{1}_{n_\mathrm{sum}}{{\otimes}}(\bm{x}(k{{+}}1){-}\bm{x}(k))\\
{=}&\mathcal{M}\bm{e_\xi}(k){{+}}\bm{1}_{n_\mathrm{sum}}{{\otimes}}(\alpha\hat{L}^2\breve{\mathcal{P}}(\bm{\xi}({k}))),
\end{aligned}
\end{equation}
where $\mathcal{M}{=}W{\otimes} I_{n_\mathrm{sum}}{+}{\bar{W}}$,
and  the third equality is obtained by using the fact that $(W{\otimes} I_{n_\mathrm{sum}}{+}{\bar{W}}{+}\hat{W})(\bm{1}_{n_\mathrm{sum}}{{\otimes}} \bm{x})=\bm{1}_{n_\mathrm{sum}}{{\otimes}} \bm{x}$.
Since the graph $\mathcal{G}$ is connected, it is easy to verify from Gershgorin's circle
theorem that $\mathcal{M}$ is a Schur matrix. Therefore, there exist a symmetric
positive definite matrices $W_\mathcal{M}$ such that
$\mathcal{M}^T W_\mathcal{M}\mathcal{M} {{-}}W_\mathcal{M} {=} {{-}}  I_{n_{\mathrm{sum}}^2}$.

\begin{theorem}\label{theo.1}
	Suppose that Assumptions  \ref{assp.fij.lipschitz}{-}\ref{assp.graph}
	hold. 
	Under the proposed DRA algorithm  (\ref{eq.law.1}), 
	the collective agent state $\bm{x}$ will converge linearly to the NE of the resource allocation game (\ref{eq.problem2}), if $\alpha$ satisfies
	\begin{equation}\label{alpha}
	\begin{aligned}
	\alpha{\leq}\min\bigg\{\frac{\gamma}{8\mu\max\limits_{i\in\mathcal{I}}\{l_i^2\|{L_i}\|^4\}},\frac{\mu}{2\sum_{i=1}^N(l_i^2\|L_i\|^2){{+}}\gamma b}\bigg\},
	\end{aligned}
	\end{equation}	
	where \begin{equation*}
	\gamma=4\max_{i\in\mathcal{I}}\{l_i^2\|L_i\|^2\}, ~b={n_\mathrm{sum}}(2\|\mathcal{M}^T W_\mathcal{M}\|^2{{+}}\|W_\mathcal{M}\|).\end{equation*}
	Moreover, the equality constraint in (\ref{eq.problem2}) is always satisfied during the iterations.
\end{theorem}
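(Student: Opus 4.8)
The plan is to prove the two assertions separately: feasibility follows from a one-line invariance argument, while linear convergence is established through a composite Lyapunov function coupling the optimization error with the state-estimation error. I would begin with the constraint. From \eqref{eq.law.x.i} we have $\bm{x}_i(k)=\bm{x}_i(0)-L_i\bm{\eta}_i(k)$, and since $L_i$ is the Laplacian of the connected subgraph $\mathcal{G}_i$, $\bm{1}_{n_i}^TL_i=0$. Hence $\bm{1}_{n_i}^T\bm{x}_i(k)=\bm{1}_{n_i}^T\bm{x}_i(0)=\sum_{ij\in\mathcal{V}_i}R_{ij}=R_i$ for all $k$, using $x_{ij}(0)=R_{ij}$. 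This settles the last sentence of the theorem independently of $\alpha$, so the equality constraint holds throughout NE seeking; moreover the iterate $\bm{x}(k)$ stays in $\Omega$, a fact I will reuse below.

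For convergence, the preparatory step is to encode the NE via Lemma \ref{lemma.NE}. Connectedness of $\mathcal{G}_i$ turns $L_i\frac{\partial f_i}{\partial\bm{x}_i}(\bm{x}^*)=0$ into $\frac{\partial f_i}{\partial\bm{x}_i}(\bm{x}^*)\in\mathrm{span}\{\bm{1}_{n_i}\}$, so that feasibility $\bm{1}_{n_i}^T(\bm{x}_i-\bm{x}_i^*)=0$ gives the orthogonality $(\bm{x}-\bm{x}^*)^T\mathcal{P}(\bm{x}^*)=0$ and also $\hat{L}\mathcal{P}(\bm{x}^*)=0$. I then introduce $\bm{\eta}^*$ through $\bm{x}^*=\bm{x}(0)-\hat{L}\bm{\eta}^*$, which is solvable because $\bm{x}^*-\bm{x}(0)$ lies in the range of $\hat{L}$; I observe that $\bm{\eta}(k)-\bm{\eta}^*$ remains in that range for all $k$ (every increment in \eqref{eq.law.eta.i} does, and $\bm{\eta}(0)=0$), and record the identity $\bm{x}(k)-\bm{x}^*=-\hat{L}(\bm{\eta}(k)-\bm{\eta}^*)$. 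This bookkeeping is precisely what lets one Laplacian factor be absorbed in the decrease estimate.

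The core is the Lyapunov function $V(k)=\|\bm{\eta}(k)-\bm{\eta}^*\|^2+\gamma\,\bm{e_\xi}(k)^TW_\mathcal{M}\bm{e_\xi}(k)$ with $\gamma$ as stated. Expanding $\|\bm{\eta}(k{+}1)-\bm{\eta}^*\|^2$ via \eqref{eq.law.eta.i} produces the cross term $2\alpha(\bm{\eta}-\bm{\eta}^*)^T\hat{L}\breve{\mathcal{P}}(\bm{\xi})$, which the identity above rewrites as $-2\alpha(\bm{x}-\bm{x}^*)^T\breve{\mathcal{P}}(\bm{\xi})$. Splitting $\breve{\mathcal{P}}(\bm{\xi})=\mathcal{P}(\bm{x})+(\breve{\mathcal{P}}(\bm{\xi})-\mathcal{P}(\bm{x}))$, the orthogonality together with strict monotonicity (Assumption \ref{assp.pseudo}) turns the first piece into $-2\alpha\mu\|\bm{x}-\bm{x}^*\|^2$, while the Lipschitz property of Assumption \ref{assp.fij.lipschitz} bounds $\|\breve{\mathcal{P}}(\bm{\xi})-\mathcal{P}(\bm{x})\|$ by a constant times $\|\bm{e_\xi}\|$, so Young's inequality converts the second piece into a fraction of $\|\bm{x}-\bm{x}^*\|^2$ plus a multiple of $\|\bm{e_\xi}\|^2$; the residual $\alpha^2$ term is controlled by $\|\hat{L}\breve{\mathcal{P}}(\bm{\xi})\|$. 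For the estimation term I use the recursion $\bm{e_\xi}(k{+}1)=\mathcal{M}\bm{e_\xi}(k)+\bm{1}_{n_\mathrm{sum}}\otimes(\alpha\hat{L}^2\breve{\mathcal{P}}(\bm{\xi}(k)))$ together with $\mathcal{M}^TW_\mathcal{M}\mathcal{M}-W_\mathcal{M}=-I$, so the unforced part contributes $-\|\bm{e_\xi}\|^2$; the forcing is handled through $\hat{L}^2\breve{\mathcal{P}}(\bm{\xi})=\hat{L}^2(\breve{\mathcal{P}}(\bm{\xi})-\mathcal{P}(\bm{x}^*))$ (which vanishes at $\bm{x}^*$ since $\hat{L}\mathcal{P}(\bm{x}^*)=0$) and bounded by $\|\hat{L}\|^2$ times a combination of $\|\bm{e_\xi}\|$ and $\|\bm{x}-\bm{x}^*\|$, which explains the $\|L_i\|^4$ in the first threshold and the constant $b=n_\mathrm{sum}(2\|\mathcal{M}^TW_\mathcal{M}\|^2+\|W_\mathcal{M}\|)$ arising from the cross and quadratic terms. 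Summing the two estimates, collecting the coefficients of $\|\bm{x}-\bm{x}^*\|^2$ and $\|\bm{e_\xi}\|^2$, and imposing \eqref{alpha} makes both coefficients strictly negative, giving $V(k{+}1)\le V(k)-c_1\|\bm{x}-\bm{x}^*\|^2-c_2\|\bm{e_\xi}\|^2$; equivalence of $\|\bm{x}-\bm{x}^*\|$ and $\|\bm{\eta}-\bm{\eta}^*\|$ on the range of $\hat{L}$ upgrades this to $V(k{+}1)\le(1-\rho)V(k)$ for some $\rho\in(0,1)$, hence linear convergence of $\bm{\xi}\to\bm{1}_{n_\mathrm{sum}}\otimes\bm{x}^*$ and $\bm{x}\to\bm{x}^*$.

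The step I expect to be the main obstacle is the treatment of the $\hat{L}^2$ preconditioning in the state update. Working directly with $\|\bm{x}-\bm{x}^*\|^2$ fails, because for a genuine game the pseudo-gradient has a nonsymmetric Jacobian and $(\bm{x}-\bm{x}^*)^T\hat{L}^2(\mathcal{P}(\bm{x})-\mathcal{P}(\bm{x}^*))$ need not be sign-definite; it is exactly the change of variable to $\bm{\eta}$, absorbing one factor of $\hat{L}$ and exposing the clean monotone term via $(\bm{x}-\bm{x}^*)^T\mathcal{P}(\bm{x}^*)=0$, that rescues the argument. The secondary difficulty is the joint control of the optimization and estimation errors under a single step size: the forcing of the $\bm{e_\xi}$ dynamics is $O(\alpha)$ yet enters the optimization estimate through the Lipschitz coupling, so the two difference inequalities must be balanced by the weight $\gamma$ and by the thresholds in \eqref{alpha} before a common contraction factor can be extracted.
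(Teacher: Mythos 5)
Your proposal is correct, but it takes a genuinely different route from the paper's proof. The paper never introduces $\bm{x}^*$ or $\bm{\eta}^*$ into the Lyapunov analysis: it works with $V(k)=V_x(k)+\gamma V_\xi(k)$ where $V_x(k)=\sum_{i=1}^N\|L_i\frac{\partial f_i}{\partial \bm{x}_i}(\bm{x}(k))\|^2$ is the residual of the NE characterization of Lemma \ref{lemma.NE}, and it invokes the strict monotonicity of Assumption \ref{assp.pseudo} on \emph{consecutive iterates}, $(\mathcal{P}(\bm{x}(k{+}1)){-}\mathcal{P}(\bm{x}(k)))^T(\bm{x}(k{+}1){-}\bm{x}(k))\geq\mu\|\bm{x}(k{+}1){-}\bm{x}(k)\|^2$, together with a lower bound relating $\|\hat{L}^2\breve{\mathcal{P}}(\bm{\xi})\|^2$ to $V_x$ through $\lambda_2(L_i^2)$; the NE enters only at the very end, when $V\to 0$ plus Lemma \ref{lemma.NE} gives $\bm{x}\to\bm{x}^*$. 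You instead run the classical contraction-to-solution argument in the $\bm{\eta}$ coordinate: solution-anchored monotonicity at $\bm{x}^*$, the orthogonality $(\bm{x}-\bm{x}^*)^T\mathcal{P}(\bm{x}^*)=0$ (valid because feasibility is invariant and $\frac{\partial f_i}{\partial\bm{x}_i}(\bm{x}^*)\in\mathrm{span}\{\bm{1}_{n_i}\}$), and the identity $\bm{x}(k)-\bm{x}^*=-\hat{L}(\bm{\eta}(k)-\bm{\eta}^*)$; this is sound provided you fix the normalization $\bm{\eta}^*\in\mathrm{range}(\hat{L})$ (which your solvability remark permits, since $\bm{x}_i(0)-\bm{x}_i^*\perp\bm{1}_{n_i}$) so that the norm equivalence via $\lambda_2(\hat{L}^2)$ on $(\ker\hat{L})^\perp$ applies — a bookkeeping point you assume implicitly. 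Your route buys a direct rate for $\|\bm{x}-\bm{x}^*\|$ and avoids the paper's somewhat delicate estimate linking $\|\hat{L}^2\breve{\mathcal{P}}(\bm{\xi})\|^2$ to $V_x$; the paper's route avoids pseudoinverse/range bookkeeping entirely and, importantly, is the one from which the stated constants are actually derived. The single caveat in your write-up is the claim that imposing exactly \dref{alpha} closes your inequalities: the constants $\gamma$ and $b$ are tuned to the paper's lemmas (e.g., in your decomposition the positive $\|\bm{e_\xi}\|^2$ contributions from the $\bm{\eta}$-part are $O(\alpha)$ rather than $O(1)$, so the weight balancing differs), and you would need to re-derive your own explicit threshold and verify it is implied by \dref{alpha}, or else prove the theorem with a modified bound; this is routine constant bookkeeping rather than a structural gap, but as written it is asserted, not checked.
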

Before presenting the proof of Theorem 1, we first introduce some useful lemmas.

\begin{lemma}\label{lemma.Vxi}
	Under Assumption \ref{assp.graph} and  the proposed algorithm (\ref{eq.law.1}), for the function
	\begin{equation}\label{eq.V.xi}
	V_{\xi}(k){=}\bm{e_\xi}(k)^TW_{\mathcal{M}}\bm{e_{\xi}}(k),
	\end{equation}
	the following inequality holds $\forall k\in\mathbb{N}$:
	\begin{equation*}\label{eq.V.xi.k{+}1.k}
	\begin{aligned}
	V_{\xi}({k{+}1}){{-}}V_{\xi}(k)
	\leq{{-}}\frac{1}{2}\|\bm{e_\xi}(k)\|^2{{+}}\alpha^2b\|\hat{L}^2\breve{\mathcal{P}}(\bm{\xi}({k}))\|^2.
	\end{aligned}
	\end{equation*}
\end{lemma}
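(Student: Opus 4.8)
The plan is to substitute the error recursion derived in the last line of (\ref{eq.e_xi.k{+}1.k}) directly into the quadratic form $V_\xi$ and expand. For notational brevity I would write the forcing term as $\bm{u}(k)=\bm{1}_{n_\mathrm{sum}}\otimes(\alpha\hat{L}^2\breve{\mathcal{P}}(\bm{\xi}(k)))$, so that the recursion reads $\bm{e_\xi}(k{+}1)=\mathcal{M}\bm{e_\xi}(k)+\bm{u}(k)$. Expanding $V_\xi(k{+}1)=(\mathcal{M}\bm{e_\xi}(k)+\bm{u}(k))^TW_\mathcal{M}(\mathcal{M}\bm{e_\xi}(k)+\bm{u}(k))$ produces three terms: the pure quadratic $\bm{e_\xi}(k)^T\mathcal{M}^TW_\mathcal{M}\mathcal{M}\bm{e_\xi}(k)$, the cross term $2\bm{e_\xi}(k)^T\mathcal{M}^TW_\mathcal{M}\bm{u}(k)$, and the forcing quadratic $\bm{u}(k)^TW_\mathcal{M}\bm{u}(k)$.

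The key simplification comes from the Lyapunov identity $\mathcal{M}^TW_\mathcal{M}\mathcal{M}-W_\mathcal{M}=-I_{n_\mathrm{sum}^2}$, which is available precisely because $\mathcal{M}$ is Schur (established right before the theorem via Gershgorin's circle theorem under the connectivity of $\mathcal{G}$ in Assumption \ref{assp.graph}). Substituting this identity into the pure quadratic term rewrites it as $\bm{e_\xi}(k)^TW_\mathcal{M}\bm{e_\xi}(k)-\|\bm{e_\xi}(k)\|^2=V_\xi(k)-\|\bm{e_\xi}(k)\|^2$, so that after cancelling $V_\xi(k)$ one is left with $V_\xi(k{+}1)-V_\xi(k)=-\|\bm{e_\xi}(k)\|^2+2\bm{e_\xi}(k)^T\mathcal{M}^TW_\mathcal{M}\bm{u}(k)+\bm{u}(k)^TW_\mathcal{M}\bm{u}(k)$.

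Next I would dominate the two remaining terms. For the cross term I apply Young's inequality in the weighted form $2\bm{a}^T\bm{b}\le\tfrac12\|\bm{a}\|^2+2\|\bm{b}\|^2$ with $\bm{a}=\bm{e_\xi}(k)$ and $\bm{b}=\mathcal{M}^TW_\mathcal{M}\bm{u}(k)$; this is the single place where the weight must be chosen precisely, since the $\tfrac12\|\bm{e_\xi}(k)\|^2$ it contributes is exactly what combines with the $-\|\bm{e_\xi}(k)\|^2$ above to leave the stated $-\tfrac12\|\bm{e_\xi}(k)\|^2$. Bounding $\|\mathcal{M}^TW_\mathcal{M}\bm{u}(k)\|^2\le\|\mathcal{M}^TW_\mathcal{M}\|^2\|\bm{u}(k)\|^2$ and $\bm{u}(k)^TW_\mathcal{M}\bm{u}(k)\le\|W_\mathcal{M}\|\,\|\bm{u}(k)\|^2$ by submultiplicativity collapses the forcing contribution to $(2\|\mathcal{M}^TW_\mathcal{M}\|^2+\|W_\mathcal{M}\|)\|\bm{u}(k)\|^2$.

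The final step is the Kronecker-norm computation $\|\bm{u}(k)\|^2=\|\bm{1}_{n_\mathrm{sum}}\|^2\cdot\alpha^2\|\hat{L}^2\breve{\mathcal{P}}(\bm{\xi}(k))\|^2=n_\mathrm{sum}\,\alpha^2\|\hat{L}^2\breve{\mathcal{P}}(\bm{\xi}(k))\|^2$, using $\|\bm{a}\otimes\bm{c}\|=\|\bm{a}\|\,\|\bm{c}\|$. Pulling out the constant $n_\mathrm{sum}(2\|\mathcal{M}^TW_\mathcal{M}\|^2+\|W_\mathcal{M}\|)=b$ then reproduces the claimed inequality verbatim. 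I do not expect any genuine obstacle: the statement is a standard one-step Lyapunov-difference bound for a Schur-stable error system driven by an exogenous input, and the only care required is in matching the Young's-inequality weight to the target coefficient $-\tfrac12$ and in correctly handling the Kronecker structure of the forcing term $\bm{u}(k)$.
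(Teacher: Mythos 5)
Your proposal is correct and follows essentially the same route as the paper's own proof in Appendix~\ref{proof.lemma.Vxi}: expand the quadratic form along the error recursion \dref{eq.e_xi.k{+}1.k}, invoke the Lyapunov identity $\mathcal{M}^TW_\mathcal{M}\mathcal{M}-W_\mathcal{M}=-I_{n_{\mathrm{sum}}^2}$, apply Young's inequality with exactly the weights $\tfrac12$ and $2$ on the cross term, and use $\|\bm{1}_{n_\mathrm{sum}}\otimes\bm{c}\|^2=n_\mathrm{sum}\|\bm{c}\|^2$ to recover the constant $b$. The only cosmetic difference is that the paper first pulls the scalar norm bound $\|\bm{u}(k)\|\leq\sqrt{n_\mathrm{sum}}\,\alpha\|\hat{L}^2\breve{\mathcal{P}}(\bm{\xi}(k))\|$ out of the cross term before applying Young's inequality, whereas you apply it vectorially first; the resulting bound is identical.
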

The proof of Lemma \ref{lemma.Vxi} is reported in Appendix \ref{proof.lemma.Vxi}.

\begin{lemma}\label{lemma.Vx}
	Under Assumptions \ref{assp.fij.lipschitz}-\ref{assp.graph} and  the proposed algorithm (\ref{eq.law.1}), for the function
	\begin{equation*}
	V_x(k)=\sum_{i=1}^N\left\|L_i\frac{\partial f_i}{\partial \bm{x}_i}(\bm{x}(k))\right\|^2,
	\end{equation*}
	the following inequality holds $\forall k\in\mathbb{N}$:
	\begin{equation*}\label{eq.Vx.k{+}1.k}
	\begin{aligned}
	&V_x(k{{+}}1){-}V_x(k)\\
	\leq&{{-}}2\Big({\mu}{{-}}{\alpha}\sum_{i=1}^N\big(l_i^2\|L_i\|^2\big)\Big)\alpha\left\|\hat{L}^2\breve{\mathcal{P}}(\bm{\xi}({k}))\right\|^2{{+}}\frac{\gamma}{4}\left\|\bm{e_\xi}(k)\right\|^2.
	\end{aligned}
	\end{equation*}
\end{lemma}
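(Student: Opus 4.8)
The plan is to monitor $V_x(k)=\sum_{i=1}^N\|L_i\frac{\partial f_i}{\partial \bm{x}_i}(\bm{x}(k))\|^2=\|\hat{L}\mathcal{P}(\bm{x}(k))\|^2$ along the iterates produced by the compact update (\ref{eq.law.1.compact}), the crucial input being the state increment $\bm{x}(k{{+}}1){-}\bm{x}(k)={-}\alpha\hat{L}^2\breve{\mathcal{P}}(\bm{\xi}(k))$ from (\ref{eq.x.k{+}1.k}). First I would apply, for each coalition $i$, the elementary identity $\|\bm{a}\|^2{-}\|\bm{b}\|^2=\|\bm{a}{-}\bm{b}\|^2+2\bm{b}^T(\bm{a}{-}\bm{b})$ with $\bm{a}=L_i\frac{\partial f_i}{\partial \bm{x}_i}(\bm{x}(k{{+}}1))$ and $\bm{b}=L_i\frac{\partial f_i}{\partial \bm{x}_i}(\bm{x}(k))$. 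Summing over $i$ splits $V_x(k{{+}}1){-}V_x(k)$ into a squared-increment term $\sum_i\|L_i(\frac{\partial f_i}{\partial \bm{x}_i}(\bm{x}(k{{+}}1)){-}\frac{\partial f_i}{\partial \bm{x}_i}(\bm{x}(k)))\|^2$ and a cross term $2\mathcal{P}(\bm{x}(k))^T\hat{L}^2(\mathcal{P}(\bm{x}(k{{+}}1)){-}\mathcal{P}(\bm{x}(k)))$.

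For the squared-increment term I would invoke the Lipschitz bound $\|\nabla f_i(\bm{a}_1){-}\nabla f_i(\bm{a}_2)\|{\leq} l_i\|\bm{a}_1{-}\bm{a}_2\|$ derived from Assumption \ref{assp.fij.lipschitz}, together with submultiplicativity of $\|L_i\|$ and the increment relation, to obtain the coalition-wise bound $l_i^2\|L_i\|^2\alpha^2\|\hat{L}^2\breve{\mathcal{P}}(\bm{\xi}(k))\|^2$; summing gives $\alpha^2(\sum_i l_i^2\|L_i\|^2)\|\hat{L}^2\breve{\mathcal{P}}(\bm{\xi}(k))\|^2$. The cross term is the heart of the argument. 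I would insert the estimation error by writing $\mathcal{P}(\bm{x}(k))=\breve{\mathcal{P}}(\bm{\xi}(k)){-}\bm{\delta}(k)$ with $\bm{\delta}(k)=\breve{\mathcal{P}}(\bm{\xi}(k)){-}\mathcal{P}(\bm{x}(k))$, and then use the increment relation to replace $\hat{L}^2\breve{\mathcal{P}}(\bm{\xi}(k))$ by ${-}\frac{1}{\alpha}(\bm{x}(k{{+}}1){-}\bm{x}(k))$. The part carrying $\breve{\mathcal{P}}(\bm{\xi}(k))$ then becomes ${-}\frac{2}{\alpha}(\bm{x}(k{{+}}1){-}\bm{x}(k))^T(\mathcal{P}(\bm{x}(k{{+}}1)){-}\mathcal{P}(\bm{x}(k)))$, to which the strictly monotone pseudo-gradient property of Assumption \ref{assp.pseudo} applies directly, producing the negative definite contribution ${-}2\mu\alpha\|\hat{L}^2\breve{\mathcal{P}}(\bm{\xi}(k))\|^2$.

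It remains to absorb the residual piece ${-}2\bm{\delta}(k)^T\hat{L}^2(\mathcal{P}(\bm{x}(k{{+}}1)){-}\mathcal{P}(\bm{x}(k)))$. Since $\hat{L}(\mathcal{P}(\bm{x}(k{{+}}1)){-}\mathcal{P}(\bm{x}(k)))$ is exactly the increment of $\hat{L}\mathcal{P}(\bm{x})$ already estimated above, I would rewrite this piece as ${-}2(\hat{L}\bm{\delta}(k))^T\hat{L}(\mathcal{P}(\bm{x}(k{{+}}1)){-}\mathcal{P}(\bm{x}(k)))$ and apply Young's inequality $2\bm{u}^T\bm{v}{\leq}\|\bm{u}\|^2{+}\|\bm{v}\|^2$. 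The term $\|\hat{L}(\mathcal{P}(\bm{x}(k{{+}}1)){-}\mathcal{P}(\bm{x}(k)))\|^2$ yields a second copy of $\alpha^2(\sum_i l_i^2\|L_i\|^2)\|\hat{L}^2\breve{\mathcal{P}}(\bm{\xi}(k))\|^2$, so the two squared contributions combine into the factor $2\alpha^2\sum_i l_i^2\|L_i\|^2$ of the claim, while $\|\hat{L}\bm{\delta}(k)\|^2$ is to be controlled by $\frac{\gamma}{4}\|\bm{e_\xi}(k)\|^2$. For the latter I would use that each scalar component $\frac{\partial f_i}{\partial x_{ij}}$ of $\nabla f_i$ is itself $l_i$-Lipschitz, together with the identity $\frac{\partial f_i}{\partial \bm{x}_i}(\bm{x})=\breve{\mathcal{P}}_i(\bm{1}_{n_i}{\otimes}\bm{x})$ and the definition $\bm{e_\xi}=\bm{\xi}{-}\bm{1}_{n_\mathrm{sum}}{\otimes}\bm{x}$ in (\ref{eq.exi.def}), to deduce $\|L_i(\breve{\mathcal{P}}_i(\bm{\xi}_i){-}\frac{\partial f_i}{\partial \bm{x}_i}(\bm{x}))\|^2{\leq} l_i^2\|L_i\|^2\|\bm{e}_{\xi,i}\|^2$, where $\bm{e}_{\xi,i}$ is the coalition-$i$ block of $\bm{e_\xi}$; summing over $i$ and bounding by $\max_i l_i^2\|L_i\|^2=\gamma/4$ closes this estimate. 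Collecting the three pieces delivers exactly the stated inequality.

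The main obstacle, and the step deserving the most care, is the cross term: strict monotonicity cannot be invoked on $\mathcal{P}(\bm{x}(k{{+}}1)){-}\mathcal{P}(\bm{x}(k))$ until the update direction $\hat{L}^2\breve{\mathcal{P}}(\bm{\xi}(k))$ has been matched to $\bm{x}(k{{+}}1){-}\bm{x}(k)$, and this matching is clean only after the estimation error $\bm{\delta}(k)$ has been separated out. The delicate bookkeeping is to arrange the two applications of the Lipschitz bound and the single application of Young's inequality so that the coefficient of $\|\hat{L}^2\breve{\mathcal{P}}(\bm{\xi}(k))\|^2$ comes out as precisely $2(\mu{-}\alpha\sum_i l_i^2\|L_i\|^2)\alpha$ and the error coefficient as precisely $\gamma/4$; any looser splitting would spoil the constants later needed to combine this estimate with the bound of Lemma \ref{lemma.Vxi} and close the overall Lyapunov argument.
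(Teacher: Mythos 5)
Your proposal is correct and follows essentially the same route as the paper's proof in Appendix B: the same identity $\|\bm{a}\|^2{-}\|\bm{b}\|^2=\|\bm{a}{-}\bm{b}\|^2{+}2(\bm{a}{-}\bm{b})^T\bm{b}$, the same insertion of $\breve{\mathcal{P}}(\bm{\xi}(k))$ into the cross term so that the strict monotonicity of Assumption \ref{assp.pseudo} acts on the exact increment $\bm{x}(k{+}1){-}\bm{x}(k)$ via (\ref{eq.x.k{+}1.k}), and the same single application of Young's inequality combined with the Lipschitz estimate (\ref{eq.breve.Pi-partial.fi}), reproducing the constants $2(\mu{-}\alpha\sum_{i=1}^N l_i^2\|L_i\|^2)\alpha$ and $\gamma/4$ exactly. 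The only difference is cosmetic: you do the bookkeeping at the stacked level with $\hat{L}$, $\mathcal{P}$ and $\bm{\delta}(k)$, whereas the paper argues coalition-by-coalition before summing.
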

The proof of Lemma \ref{lemma.Vx} is shown in Appendix \ref{proof.lemma.Vx}.

Now, we are ready to present the proof of Theorem \ref{theo.1}.\\
\textbf{Proof of Theorem 1:}	

From (\ref{eq.law.x.i}), one has $\mathbf{1}_{n_i}^T\bm{x}_i(k){=}\mathbf{1}_{n_i}^T\bm{x}_i(0){=}R_i,$ $\forall k{\in}\mathbb{N}$, meaning that the equality constraint in problem \dref{eq.problem2} is  satisfied at each iteration under the proposed algorithm.

Consider the following Lyapunov function:
\begin{equation}\label{eq.V}
V(k)=V_x(k){+}\gamma V_\xi(k),
\end{equation}
where $V_\xi(k)$, $V_x(k)$, and  $\gamma$ have been defined in Lemmas \ref{lemma.Vxi}, \ref{lemma.Vx} and Theorem \ref{theo.1} respectively.
From Lemmas \ref{lemma.Vxi} and \ref{lemma.Vx}, one has
\begin{equation*}
\begin{aligned}
&V({k{+}1}){{-}}V(k)\\
\leq&{{-}}\frac{\gamma}{4}\|\bm{e_\xi}(k)\|^2
{-}\bigg(2{\mu}{-}\alpha\Big(2\sum_{i=1}^N(l_i^2\|L_i\|^2){{+}}\gamma{b}\Big)\bigg)\\
&\times\alpha\left\|\hat{L}^2\breve{\mathcal{P}}(\bm{\xi}({k}))\right\|^2.
\end{aligned}
\end{equation*}
Recalling (\ref{alpha}), one has $\alpha\leq{\mu}/\big({2\sum_{i=1}^N(l_i^2\|L_i\|^2){{+}}\gamma{b}}\big)$. It follows that
\begin{equation}\label{eq.Vk{+}1{-}Vk.1.game2}
	\begin{aligned}
		&V({k{+}1}){{-}}V(k)\\
		\leq&{{-}}\frac{\gamma}{4}\|\bm{e_\xi}(k)\|^2{-}\mu\alpha\left\|\hat{L}^2\breve{\mathcal{P}}(\bm{\xi}({k}))\right\|^2.
	\end{aligned}
\end{equation}

Furthermore, the following fact can be verified
\begin{equation}\label{eq.hat L breve P}
\begin{aligned}
&{-}\left\|\hat{L}^2\breve{\mathcal{P}}(\bm{\xi}({k}))\right\|^2\\
=&{-}\sum_{i=1}^N\left\|L_i^2\left(\breve{\mathcal{P}}_i(\bm{\xi}_i({k})){-}\frac{\partial f_i}{\partial \bm{x}_i}(\bm{x}(k)){+}\frac{\partial f_i}{\partial \bm{x}_i}(\bm{x}(k))\right)\right\|^2\\
\leq&\sum_{i=1}^N\bigg(\left\|L_i^2\left(\breve{\mathcal{P}}_i(\bm{\xi}_i({k})){-}\frac{\partial f_i}{\partial \bm{x}_i}(\bm{x}(k))\right)\right\|^2\\
&{-}\frac{1}{2}\left\|L_i^2\frac{\partial f_i}{\partial \bm{x}_i}(\bm{x}(k))\right\|^2\bigg)\\
\leq&\sum_{i=1}^N\bigg(\left\|{L_i}\right\|^4l_i^2\left\|\bm{\xi}_i({k}){-}\bm{1}_{n_i}{\otimes}\bm{x}(k)\right\|^2\\
&{-}\frac{\lambda_2(L_i^2)}{2}\left\|L_i\frac{\partial f_i}{\partial \bm{x}_i}(\bm{x}(k))\right\|^2\bigg)\\
\leq&\max_{i\in\mathcal{I}}\{\|{L_i}\|^4l_i^2\}\|\bm{e_\xi}({k})\|^2{-}\frac{1}{2}\min_{i\in\mathcal{I}}\{\lambda_2(L_i^2)\}V_x(k),\\
\end{aligned}
\end{equation}
where the first inequality is derived by using
\begin{equation*}
\begin{aligned}
&-2\left(L_i^2\left(\breve{\mathcal{P}}_i(\bm{\xi}_i({k})){-}\frac{\partial f_i}{\partial \bm{x}_i}(\bm{x}(k))\right)\right)^T\left(L_i^2\frac{\partial f_i}{\partial \bm{x}_i}(\bm{x}(k))\right)\\
\leq &2\left\|L_i^2\left(\breve{\mathcal{P}}_i(\bm{\xi}_i({k})){-}\frac{\partial f_i}{\partial \bm{x}_i}(\bm{x}(k))\right)\right\|^2+\frac{1}{2}\left\|L_i^2\frac{\partial f_i}{\partial \bm{x}_i}(\bm{x}(k))\right\|^2,
\end{aligned}
\end{equation*}
the second inequality is obtained from
(\ref{eq.breve.Pi-partial.fi}) in the appendix, and $\lambda_2(L_i^2)$ denotes the smallest non-zero eigenvalue of the matrix $L_i^2$.
Substituting (\ref{eq.hat L breve P}) back into (\ref{eq.Vk{+}1{-}Vk.1.game2}) yields
\begin{equation*}
	\begin{aligned}
		&V({k{+}1}){{-}}V(k)\\
		\leq&{{-}}\big(\frac{\gamma}{4}{-}\mu\alpha\max_{i\in\mathcal{I}}\{\|{L_i}\|^4l_i^2\} \big)\|\bm{e_\xi}(k)\|^2\\
		&{-}\frac{\mu\alpha}{2}{\min\limits_{i\in\mathcal{I}}\{\lambda_2(L_i^2)\}}V_x(k).
	\end{aligned}
\end{equation*}
Note from (\ref{alpha}) that  $\alpha\leq{\gamma}/({8\mu\max_{i\in\mathcal{I}}\{\|{L_i}\|^4l_i^2\}})$. It follows that
\begin{equation*}
\begin{aligned}
&V({k{+}1}){{-}}V(k)\\
\leq&{{-}}\frac{\gamma}{8}\|\bm{e_\xi}(k)\|^2{-}\frac{\mu\alpha}{2}{\min\limits_{i\in\mathcal{I}}\{\lambda_2(L_i^2)\}}V_x(k)\\
\leq&{-}\varepsilon_1 V(k),
\end{aligned}
\end{equation*}
where
$$\varepsilon_1=\min\{\frac{1}{8 \|W_\mathcal{M}\|},\frac{\mu\alpha}{2}{\min\limits_{i\in\mathcal{I}}\{\lambda_2(L_i^2)\}}\}.
$$ The above inequality indicates that $V(k)$ will converge to zero with a linear rate $O((1{-}\varepsilon_1)^k)$. Then, one can conclude from Lemma \ref{lemma.NE} that $\bm{x}$ will converge linearly to $\bm{x}^*$.
$\hfill\blacksquare$

\section{Distributed NE computation for the general case}\label{sec.general case}

In this section, we consider \textit{the general case} described by model (\ref{eq.problem1}) that the individual agent benefits may be effected by the decisions of all the game participants. In this case, the equation (\ref{relation}) is no longer valid, which implies that the proposed algorithm in the previous section cannot  ensure the collective agent state converge to the NE. Based partly upon the results provided in the last section, we will design a new algorithm for \textit{the general case} and present the convergence analysis.
\subsection{Algorithm Design}
To make the collective agent state converge to  
the NE in game (\ref{eq.problem1}), a distributed algorithm is designed for each agent $ ij\in\mathcal{V}~\forall k\in\mathbb{N}$ as follows:
\begin{subequations}\label{eq.law.2}
\begin{align}
x_{ij}(k)=&x_{ij}(0){{-}}\sum_{im\in\mathcal{N}_{ij}^{i}} (\eta_{ij}(k){{-}}\eta_{im}(k)),\label{eq.law.x.ij.2}\\
{\eta}_{ij}(k{{+}}1)=&{\eta}_{ij}(k){+}{\beta}\sum_{im\in\mathcal{N}_{ij}^{i}} \left(\psi_{ij}^{ij}(k){-}\psi_{ij}^{im}(k)\right),\label{eq.law.eta.ij.2}\\
\psi^{il}_{ij}(k{{+}}1)=&\sum_{im\in\mathcal{N}_{ij}^{i}}c_{ij}^{im}\psi_{im}^{il}(k){+}\frac{\partial f_{ij}}{\partial x_{il}}\big(\bm{\xi}_{ij}(k{{+}}1)\big)\nonumber\\
&{{-}}\frac{\partial f_{ij}}{\partial x_{il}}\big(\bm{\xi}_{ij}(k)\big),~~\forall il\in\mathcal{V}_i,\label{eq.law.psi.ijil}\\
\xi_{ij}^{pq}(k{{+}}1)=&\bar{w}_{ij}^{pq}\xi_{ij}^ {pq}(k){{+}}\sum\limits_{lm\in\mathcal{N}_{ij}}w_{ij}^{lm}\xi_{lm}^{pq}(k)\nonumber\\
&{+}w_{ij}^{pq}x_{pq}(k),~~~\forall pq\in\mathcal{V},\label{eq.law.xi.ijpq2}
\end{align}
\end{subequations}
where $\beta$ is a positive constant to be determined, $x_{ij}(0)=R_{ij}$, $\eta_{ij}(0){=}0$, $\psi_{ij}^{il}(0)=\frac{\partial f_{ij}}{\partial x_{il}}(\bm{\xi}_{ij}(0)),\forall il\in\mathcal{V}_i$, and other variables are defined the same as in previous sections.
In this algorithm, each agent $ij$ should update the variables $x_{ij}, \eta_{ij},\psi_{ij}^{i1},\cdots,\psi_{ij}^{in_i},\xi_{ij}^{11},\cdots,\xi_{ij}^{Nn_N}$.

Define the vectors
\begin{equation*}
\begin{aligned}
&\bm{\psi}_i=[\psi^{i1}_{i1},\psi^{i2}_{i1},{\cdots},\psi^{in_i}_{i1},\psi_{i2}^{i1},\cdots,\psi_{i2}^{in_i},\cdots,\psi_{in_i}^{in_i}]^T\in\mathbb{R}^{n_i^2},\\
&\bm{\psi}=[\bm{\psi}_1^T,\bm{\psi}_2^T,{\cdots},\bm{\psi}_N^T]^T\in\mathbb{R}^{n_1^2{+}\cdots{+}n_N^2},
\end{aligned}
\end{equation*}
and the function $\mathcal{Q}_i:\mathbb{R}^{n_in_\mathrm{sum}}\rightarrowtail\mathbb{R}^{n_i^2}$:
\begin{equation*}
\begin{aligned}
\mathcal{Q}_i(\bm{\xi}_{i}){=}\left[(\frac{\partial f_{i1}}{\partial \bm x_{i}}(\bm{\xi}_{i1}))^T,(\frac{\partial f_{i2}}{\partial \bm x_{i}}(\bm{\xi}_{i2}))^T,{\cdots},(\frac{\partial f_{in_i}}{\partial \bm x_{i}}(\bm{\xi}_{in_i}))^T\right]^T.
\end{aligned}
\end{equation*}
Then, the proposed algorithm (\ref{eq.law.2}) can be rewritten in the following compact form $\forall i\in\mathcal{I}$:
\begin{subequations}
\begin{align}
\bm{x}_i(t)=&\bm{x}_i(0){{-}}L_i\bm{\eta}_i(k),\label{eq.law.x.i.2}\\
{\bm{\eta}}_i(k{{+}}1)=&{\bm{\eta}}_i(k){+}\beta\breve{L}_i\bm{\psi}_{i}({k}),\label{eq.law.eta.i.2}\\
{\bm{\psi}}_i(k{{+}}1)=&(C_i{{\otimes}} I_{n_i}){\bm{\psi}}_i(k){+}\mathcal{Q}_i(\bm{\xi}_i(k{{+}}1))\nonumber\\
&{{-}}\mathcal{Q}_i(\bm{\xi}_i({k})),\label{eq.law.psi.i}\\
\bm{\xi}(k{{+}}1)
=&(W{\otimes} I_{n_\mathrm{sum}}{+}{\bar{W}})\bm{\xi}(k){{+}}\hat{W}\big(\bm{1}_{n_\mathrm{sum}}{{\otimes}}\bm{x}(k)\big),\label{eq.law.xi.2}
\end{align}
\end{subequations}
where $\breve{L}_i=\mathrm{diag}\{(L_i)_{1},\cdots,(L_i)_{n_i}\}$ with $(L_i)_{j}$ denoting the $j$th row of the matrix $L_i$. 

\begin{remark}
	
	The difference between the algorithm (\ref{eq.law.2}) for the general case and the algorithm (\ref{eq.law.1}) for the special case lies in the design of  auxiliary variables $\eta_{ij}$, $\forall ij\in\mathcal{V}$.
	As  discussed in Remark 1, under (\ref{eq.law.x.ij.2}), the primal problem with the resource constraints can be converted to a new problem  regarding the composite function of $\bm{\eta}$ without any constraint, and then solved based on pseudo-gradient descent. In such a design approach, the update of $\eta_{ij}$ for each agent $ij\in\mathcal{V}$ requires the information of  ${\partial f_i}/{\partial x_{ij}}$ and ${\partial f_i}/{\partial x_{im}},\forall im\in\mathcal{N}_{ij}^{i}$.  However, in the general case,  since the equation (\ref{relation}) is no longer valid,  agent $ij\in\mathcal{V}$ cannot access the exact knowledge of ${\partial f_i}/{\partial x_{ij}}$ and ${\partial f_i}/{\partial x_{im}},\forall im\in\mathcal{N}_{ij}^{i}$.
To overcome this issue,
	the auxiliary variables $\psi_{ij}^{il}~\forall il\in\mathcal{V}_i$ governed by (\ref{eq.law.psi.ijil}) are skillfully integrated into the update of $\eta_{ij}$ in (\ref{eq.law.eta.ij.2}), where $\psi_{ij}^{il}$ is computed by agent $ij$ to estimate the value of $({1}/{n_i})\cdot({\partial f_i}/{\partial x_{il}})(\bm{x})$.
	The design of (\ref{eq.law.psi.ijil}) is inspired by the gradient tracking technique in distributed optimization \cite{nedic2017}.
\end{remark}

\subsection{Analysis on Steady States}\label{sec.convergence}
Define the following variables for notational brevity:
\begin{equation*}
\begin{aligned}
\bar{\bm{\psi}}_i&=\frac{1}{n_i}(\mathbf{1}_{n_i}^T{{\otimes}}I_{n_i}\big){\bm{\psi}}_i\in\mathbb{R}^{n_i},\\
\bar{\mathcal{Q}}_i(\cdot)&=\frac{1}{n_i}(\mathbf{1}_{n_i}^T{{\otimes}}I_{n_i}){{\mathcal{Q}}_i}(\cdot)\in\mathbb{R}^{n_i}.
\end{aligned}
\end{equation*}
Since the initial value of $\psi_{ij}^{il}$ is set as $\psi_{ij}^{il}(0)=\frac{\partial f_{ij}}{\partial x_{il}}(\bm{\xi}_{ij}(0))$, one has
$
\bm{\psi}_i(0)=\mathcal{Q}_i(\bm{\xi}_i(0))$. Note that  $\bm{1}_{n_i}^TC_i=\bm{1}_{n_i}^T$.	Then, one can derive from  (\ref{eq.law.psi.i}) that
\begin{equation}\label{eq.bar.psi.Q.i}
\bar{\bm{\psi}}_i(k)=\bar{\mathcal{Q}}_i(\bm{\xi}_i(k)),\quad\forall k\in\mathbb{N}.
\end{equation}
One can also obtain by definition that
\begin{equation}\label{eq.bar.Q.i.gradient}
\bar{\mathcal{Q}}_i(\mathbf{1}_{n_i}{{\otimes}}\bm{x})=\frac{1}{n_i}\cdot\frac{\partial f_i}{\partial \bm{x}_i}(\bm{x}).
\end{equation}
The above two equations are quite critical for the forthcoming convergence analysis.

Next, we will present a steady-state analysis of the proposed algorithm, which can facilitate the error system construction and the  convergence analysis.
Suppose that the algorithm variables $\bm{x}_i(k)$, $\bm{\psi}_i(k)$ and $\bm{\xi}(k)$ will settle on some points $\bm{x}_i(\infty)$, $\bm{\psi}_i(\infty)$ and $\bm{\xi}(\infty)$ respectively.
Then, from (\ref{eq.law.eta.i.2}), (\ref{eq.law.psi.i}), and (\ref{eq.law.xi.2}), the steady states satisfy
\begin{align}
&\breve{L}_i\bm{\psi}_{i}({\infty})=0,\label{eq.law.x.i.steady}\\
&{\bm{\psi}}_i(\infty)=(C_i{{\otimes}} I_{n_i}){\bm{\psi}}_i(\infty),\label{eq.law.psi.i.steady}\\
&\left(I_{n_{\mathrm{sum}}^2}{-}\mathcal{M}\right)\big(\bm{\xi}(\infty){{-}}\bm{1}_{n_\mathrm{sum}}{{\otimes}}\bm{x}(\infty)\big)=0.\label{eq.law.xi.steady}
\end{align}
One  can obtain  from (\ref{eq.law.psi.i.steady}) that
$\bm{\psi}_i(\infty)={\bm{1}_{n_i}}{\otimes} \tau_i,
$
where $\tau_i$ is a constant vector to be determined later. Since
\begin{equation*}
(\mathbf{1}_{n_i}^T{{\otimes}}I_{n_i})\bm{\psi}_i(\infty)=(\mathbf{1}_{n_i}^T{{\otimes}}I_{n_i})({\bm{1}_{n_i}}{\otimes} \tau_i)=n_i\tau_i,
\end{equation*}
one has $
\tau_i=\bar{\bm{\psi}}_i(\infty)$, which implies
\begin{equation}\label{eq.psi.i.infty}
\bm{\psi}_i(\infty)={\bm{1}_{n_i}}{\otimes}\bar{\bm{\psi}}_i(\infty).
\end{equation}
Noting that $\left(I_{n_{\mathrm{sum}}^2}{-}\mathcal{M} \right)$ is non{-}singular,  one can get from (\ref{eq.law.xi.steady}) that
\begin{equation}\label{eq.xi.i.infty}
\bm{\xi}(\infty)=\bm{1}_{n_\mathrm{sum}}{{\otimes}}\bm{x}(\infty).
\end{equation}
Combining (\ref{eq.bar.psi.Q.i}), (\ref{eq.bar.Q.i.gradient}), (\ref{eq.psi.i.infty}) and (\ref{eq.xi.i.infty}), one has
\begin{equation*}
\bm{\psi}_i(\infty)={\bm{1}_{n_i}}{\otimes}\left(\frac{1}{n_i}\cdot\frac{\partial f_i}{\partial \bm{x}_i}(\bm{x}(\infty)\right).
\end{equation*}
Substituting the above equation into (\ref{eq.law.x.i.steady}) yields $$L_i\frac{\partial f_i}{\partial \bm{x}_i}(\bm{x}(\infty))=0.$$
Then one has
$
\bm{x}(\infty)=\bm{x}^*
$ from Lemma \ref{lemma.NE}.

\subsection{Error System Construction and Convergence Analysis}

Based on the analysis on steady states,  we define the convergence errors
\begin{equation*}
\begin{aligned}
\bm{e _{\psi_i}}(t)&=\bm{\psi}_i(t){{-}}{\bm{1}_{n_i}}{\otimes}\bar{\bm{\psi}}_i(t),
\end{aligned}
\end{equation*}
and $\bm{e_\psi}=[\bm{e_{\psi_1}}^T,\bm{e_{\psi_2}}^T,{\cdots},\bm{e_{\psi_N}}^T]^T$.
One can obtain from the iteration of $\bm{{\psi}}_i$ in (\ref{eq.law.psi.i})  that:
\begin{equation}\label{eq.e_psi.i.k{+}1.k}
\begin{aligned}
&\bm{e_{\psi_i}}(k{{+}}1)\\
=&(C_i{{\otimes}} I_{n_i}){\bm{\psi}}_i(k){{-}}{\bm{1}_{n_i}}{{\otimes}}\bar{\bm{\psi}}_i(k){+}\mathcal{Q}_i(\bm{\xi}_i(k{{+}}1)){{-}}\mathcal{Q}_i(\bm{\xi}_i({k}))\\
&{{-}}{\bm{1}_{n_i}}{\otimes}\Big(\frac{1}{n_i}(\bm{1}_{n_i}^T{{\otimes}} I_{n_i})\big(\mathcal{Q}_i(\bm{\xi}_i(k{{+}}1)){{-}}\mathcal{Q}_i(\bm{\xi}_i({k}))\big)\Big)\\
=&(C_i{{\otimes}} I_{n_i})\bm{e_{\psi_i}}(k)
{+}\mathcal{Q}_i(\bm{\xi}_i(k{{+}}1)){{-}}\mathcal{Q}_i(\bm{\xi}_i({k}))\\
&{{-}}\left(\frac{{\bm{1}_{n_i}}\bm{1}_{n_i}^T}{n_i}{{\otimes}} I_{n_i}\right)\big(\mathcal{Q}_i(\bm{\xi}_i(k{{+}}1)){{-}}\mathcal{Q}_i(\bm{\xi}_i({k}))\big)\\
=&\left(\bar{C}_i{{\otimes}} I_{n_i}\right)\bm{e_{\psi_i}}(k){+}\left(\bar{I}_{i}{{\otimes}} I_{n_i}\right)\big(\mathcal{Q}_i(\bm{\xi}_i(k{{+}}1)){{-}}\mathcal{Q}_i(\bm{\xi}_i({k}))\big),
\end{aligned}
\end{equation}
where
$
\bar{C}_i=C_i{{-}}\frac{{\bm{1}_{n_i}}\bm{1}_{n_i}^T}{n_i}
$,
$
\bar{I}_{i}=I_{n_i}{{-}}\frac{{\bm{1}_{n_i}}\bm{1}_{n_i}^T}{n_i}
$,
and the last equality is derived by using the fact that
$$(\frac{{\bm{1}_{n_i}}\bm{1}_{n_i}^T}{n_i}{{\otimes}} I_{n_i})\bm{e_{\psi_i}}(k)=0.$$
Under Assumption \ref{assp.graph}, one has $\lim_{k\rightarrow\infty}C_i^k=\mathbf{1}_{n_i}\mathbf{1}_{n_i}^T/n_i$, which further implies that $\lim_{k\rightarrow\infty}\bar{C}_i^k=0$. Then, it is obvious that
$\bar{C}_i$ is a Schur matrix. Therefore, there exists a symmetric
positive definite matrix $W_{c_i}$ such that
$\bar{C}_i^T W_{c_i}\bar{C}_i {{-}}W_{c_i} = {{-}}  I_{n_i}$.

From (\ref{eq.law.x.i.2}) and (\ref{eq.law.eta.i.2}), one has
\begin{equation}
\bm{x}_i(k{{+}}1){-}\bm{x}_i(k)={{-}}\beta L_i\breve{L}_i\bm{\psi}_{i}({k}),
\end{equation}
which can be rewritten  in the following collective form
\begin{equation}\label{eq.x.k{+}1.k.2}
\bm{x}(k{{+}}1){-}\bm{x}(k)={{-}}\beta\hat{L}\hat{\breve{L}}\bm{\psi}({k}),
\end{equation}
where $\hat{L}=\mathrm{diag}\{L_1,\cdots,L_N\}$ and  $\hat{\breve{L}}=\mathrm{diag}\{\breve{L}_1,\cdots,\breve{L}_N\}$.
Then, under the algorithm designed in this section, for the estimate error $\bm{e_\xi}$ defined in (\ref{eq.exi.def}), one can derive the following equality $\forall k\in\mathbb{N}$:
\begin{equation}\label{eq.e_xi.k{+}1.k.2}
\begin{aligned}
&\bm{e_\xi}(k{{+}}1)
{=}\mathcal{M}\bm{e_\xi}(k){{+}}\bm{1}_{n_\mathrm{sum}}{{\otimes}}(\beta\hat{L}\hat{\breve{L}}\bm{\psi}({k})),
\end{aligned}
\end{equation}
where $\mathcal{M}$ has been defined in the previous section.

\begin{theorem}\label{theo.2}
	Suppose that Assumptions \ref{assp.fij.lipschitz}{-}\ref{assp.graph} hold. 
	Under the proposed DRA algorithm  (\ref{eq.law.2}),
	the collective agent state $\bm{x}$ will converge linearly to the NE of the resource allocation game (\ref{eq.problem1}), if $\beta$ satisfies
	\begin{equation}\label{beta}
	\begin{aligned}
	&	\beta\leq \min\bigg\{\frac{\mu}{2\left(\sum_{i=1}^N({l_i^2\|L_i\|^2}/{n_i}){+}\gamma_\xi b\right)},\\
	&	\frac{\gamma_\psi}{8\mu\max\limits_{i\in\mathcal{I}}\{\|{L_i\breve{L}_i}\|^2\}},\frac{\gamma_\xi}{8\mu\max\limits_{i\in\mathcal{I}}\{{\|{L_i}\|^4(\sum_{j=1}^{n_i}l_{ij}^2)}/{n_i^2}\}}\bigg\},
	\end{aligned}
	\end{equation}	
	where
	\begin{equation*}
	\begin{aligned}
	\gamma_\psi=&4\max_{i\in\mathcal{I}}\{n_i\|\breve{L}_i\|^2\},~b={n_\mathrm{sum}}(2\|\mathcal{M}^T W_\mathcal{M}\|^2{{+}}\|W_\mathcal{M}\|),\\
	\gamma_\xi=&4\Big(\max_{i\in\mathcal{I}}\{\frac{1}{n_i}(\sum_{j=1}^{n_i}l_{ij}^2)\left\|L_i\right\|^2\}{+}2\gamma_\psi\max_{i\in\mathcal{I},ij\in\mathcal{V}_i}\{(2\|\bar{C}_i^TW_{c_i}\bar{I}_i\|^2\\
	&{+}\|\bar{I}_i^T W_{c_i} \bar{I}_i\|)l_{ij}^2\}
	\|I_{n_{\mathrm{sum}}}{{-}}\mathcal{M}\|^2\Big).
	\end{aligned}
	\end{equation*}
	Moreover, the equality constraint in (\ref{eq.problem1}) is always satisfied during the iterations.
\end{theorem}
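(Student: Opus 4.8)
The plan is to follow the same three-block template used for Theorem \ref{theo.1}, enlarged by one extra Lyapunov term that accounts for the gradient-tracking error. First, exactly as in the proof of Theorem \ref{theo.1}, equation \eqref{eq.law.x.i.2} gives $\bm{1}_{n_i}^T\bm{x}_i(k)=\bm{1}_{n_i}^T\bm{x}_i(0)=R_i$ for all $k$, so the coupled equality constraint in \eqref{eq.problem1} is met at every iteration; this part is immediate. The steady-state analysis of Section \ref{sec.convergence} has already shown that the unique fixed point satisfies $L_i\frac{\partial f_i}{\partial\bm{x}_i}(\bm{x}(\infty))=0$, hence $\bm{x}(\infty)=\bm{x}^*$ by Lemma \ref{lemma.NE}. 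Thus the whole task reduces to proving that the error system converges linearly to the origin.

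Next I would introduce the composite Lyapunov function $V(k)=V_x(k)+\gamma_\xi V_\xi(k)+\gamma_\psi V_\psi(k)$, where $V_x(k)=\sum_{i=1}^N\|L_i\frac{\partial f_i}{\partial\bm{x}_i}(\bm{x}(k))\|^2$ and $V_\xi(k)=\bm{e_\xi}(k)^TW_{\mathcal{M}}\bm{e_\xi}(k)$ are the same objects as before, and the new term $V_\psi(k)=\sum_{i=1}^N\bm{e_{\psi_i}}(k)^TW_{c_i}\bm{e_{\psi_i}}(k)$ measures the tracking error through the Schur matrix $\bar{C}_i$. The crucial structural fact to exploit is that, combining \eqref{eq.x.k{+}1.k.2} and \eqref{eq.e_xi.k{+}1.k.2}, the $\beta$-driven forcing cancels so that $\bm{\xi}(k+1)-\bm{\xi}(k)=-(I_{n_{\mathrm{sum}}^2}-\mathcal{M})\bm{e_\xi}(k)$ depends only on $\bm{e_\xi}$. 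By Assumption \ref{assp.fij.lipschitz} the forcing term $\mathcal{Q}_i(\bm{\xi}_i(k+1))-\mathcal{Q}_i(\bm{\xi}_i(k))$ in the recursion \eqref{eq.e_psi.i.k{+}1.k} is then of order $\|I_{n_{\mathrm{sum}}}-\mathcal{M}\|\,\|\bm{e_\xi}(k)\|$, which yields $V_\psi(k+1)-V_\psi(k)\leq-\frac{1}{2}\|\bm{e_\psi}(k)\|^2+c_\psi\|\bm{e_\xi}(k)\|^2$ with $c_\psi$ matching the $\bm{e_\psi}$-dependent part of $\gamma_\xi$ (hence the appearance of $\|I_{n_{\mathrm{sum}}}-\mathcal{M}\|^2$ and the $W_{c_i}$ norms there).

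For the remaining two blocks, $V_\xi(k+1)-V_\xi(k)\leq-\frac{1}{2}\|\bm{e_\xi}(k)\|^2+\beta^2b\|\hat{L}\hat{\breve{L}}\bm{\psi}(k)\|^2$ follows exactly as in Lemma \ref{lemma.Vxi}, now with the $\bm{\psi}$-driven forcing, and a bound on $V_x(k+1)-V_x(k)$ follows the pattern of Lemma \ref{lemma.Vx} using the strict monotonicity in Assumption \ref{assp.pseudo}. The binding step is to decompose $\bm{\psi}_i=\bm{e_{\psi_i}}+\bm{1}_{n_i}\otimes\bar{\bm{\psi}}_i$ and to invoke \eqref{eq.bar.psi.Q.i}-\eqref{eq.bar.Q.i.gradient} together with Lipschitz continuity to rewrite $\hat{L}\hat{\breve{L}}\bm{\psi}(k)$ as the scaled pseudo-gradient $\hat{L}^2(\frac{1}{n_i}\frac{\partial f_i}{\partial\bm{x}_i}(\bm{x}(k)))_{i}$ plus residuals controlled by $\|\bm{e_{\psi}}(k)\|$ and $\|\bm{e_\xi}(k)\|$. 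Summing the three estimates with the weights $\gamma_\xi,\gamma_\psi$ chosen as in the statement, the coefficients of $\|\bm{e_\xi}(k)\|^2$, $\|\bm{e_\psi}(k)\|^2$ and $\|\hat{L}\hat{\breve{L}}\bm{\psi}(k)\|^2$ each become nonpositive precisely when $\beta$ obeys the three respective bounds in \eqref{beta}, leaving $V(k+1)-V(k)\leq-\varepsilon_2V(k)$ for some $\varepsilon_2\in(0,1)$; linear convergence of $V$ and thus $\bm{x}\to\bm{x}^*$ then follow from Lemma \ref{lemma.NE}.

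The main obstacle I anticipate is the nested coupling of the three error quantities: $\bm{e_\psi}$ is forced by the increment of $\bm{\xi}$ (hence by $\bm{e_\xi}$), while $\bm{e_\xi}$ is forced by $\bm{\psi}$ (hence by $\bm{e_\psi}$ and the pseudo-gradient), and $V_x$ is driven by the same $\bm{\psi}$. Closing this loop requires picking $\gamma_\xi$ large enough to dominate the $\gamma_\psi c_\psi$ contribution fed back from the tracking block (which is exactly why $\gamma_\xi$ carries a $\gamma_\psi$-term) and $\gamma_\psi$ large enough to absorb the $\|\bm{e_\psi}\|^2$ residual from $V_x$, all while keeping $\beta$ small enough that the quadratic forcing $\beta^2b\|\hat{L}\hat{\breve{L}}\bm{\psi}\|^2$ is dominated by the negative leading term of $V_x$. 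The delicate, and error-prone, part is the three-way bookkeeping of these constants and verifying that the explicit $\gamma_\xi,\gamma_\psi,b$ in the statement are indeed the admissible choices.
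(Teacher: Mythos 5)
Your proposal matches the paper's proof essentially step for step: constraint invariance directly from (\ref{eq.law.x.i.2}), the three-block Lyapunov function assembled from Lemmas \ref{lemma.Vxi.2}, \ref{lemma.Vpsi} and \ref{lemma.barVx}, the exact cancellation giving $\bm{\xi}(k{+}1)-\bm{\xi}(k)=-(I_{n_{\mathrm{sum}}^2}-\mathcal{M})\bm{e_\xi}(k)$ so that the tracking error is forced only by $\bm{e_\xi}$ (your version is in fact slightly sharper than the paper's bound with its factor $2$), and the decomposition of $\hat{L}\hat{\breve{L}}\bm{\psi}$ via $\bm{\psi}_i=\bm{e_{\psi_i}}+\bm{1}_{n_i}\otimes\bar{\bm{\psi}}_i$ together with (\ref{eq.bar.psi.Q.i})--(\ref{eq.bar.Q.i.gradient}). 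The only cosmetic deviation is that the paper weights the gradient block as $\bar V_x(k)=\sum_{i=1}^N\frac{1}{2n_i}\|L_i\frac{\partial f_i}{\partial\bm{x}_i}(\bm{x}(k))\|^2$ rather than your unweighted $V_x$ --- precisely the $1/n_i$ scaling your ``binding step'' already produces --- which is what makes the stated constants $\gamma_\psi,\gamma_\xi$ and the thresholds in (\ref{beta}) come out exactly as written.
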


Before proceeding, we present some useful lemmas.

\begin{lemma}\label{lemma.Vxi.2}
	Under Assumption \ref{assp.graph} and  the proposed algorithm (\ref{eq.law.2}), for the function $V_{\xi}(k)$ defined in (\ref{eq.V.xi}),
	the following inequality holds $\forall k\in\mathbb{N}$:
	\begin{equation*}\label{eq.V.xi.k{+}1.k.2}
	\begin{aligned}
	V_{\xi}({k{+}1}){{-}}V_{\xi}(k)\leq{{-}}\frac{1}{2}\|\bm{e_\xi}(k)\|^2{{+}}\beta^2{b}\|\hat{L}\hat{\breve{L}}\bm{\psi}({k})\|^2.
	\end{aligned}
	\end{equation*}
\end{lemma}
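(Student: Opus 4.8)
The plan is to mirror the proof of Lemma~\ref{lemma.Vxi}, since the error recursion \dref{eq.e_xi.k{+}1.k.2} has exactly the same structure as \dref{eq.e_xi.k{+}1.k}, the only change being that the forcing term $\alpha\hat{L}^2\breve{\mathcal{P}}(\bm{\xi}(k))$ of the special case is replaced by $\beta\hat{L}\hat{\breve{L}}\bm{\psi}(k)$. First I would abbreviate the forcing vector as $\bm{s}(k)=\bm{1}_{n_\mathrm{sum}}{\otimes}\bigl(\beta\hat{L}\hat{\breve{L}}\bm{\psi}(k)\bigr)\in\mathbb{R}^{n_\mathrm{sum}^2}$, so that \dref{eq.e_xi.k{+}1.k.2} reads $\bm{e_\xi}(k{+}1)=\mathcal{M}\bm{e_\xi}(k)+\bm{s}(k)$. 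Substituting this into the definition \dref{eq.V.xi} of $V_\xi$ and expanding the quadratic form gives three pieces: the pure error term $\bm{e_\xi}(k)^T\mathcal{M}^TW_{\mathcal{M}}\mathcal{M}\bm{e_\xi}(k)$, the cross term $2\bm{e_\xi}(k)^T\mathcal{M}^TW_{\mathcal{M}}\bm{s}(k)$, and the pure forcing term $\bm{s}(k)^TW_{\mathcal{M}}\bm{s}(k)$.

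The identity that does the work is the discrete Lyapunov equation $\mathcal{M}^TW_{\mathcal{M}}\mathcal{M}-W_{\mathcal{M}}={-}I_{n_{\mathrm{sum}}^2}$, available because $\mathcal{M}$ is Schur (established just before Theorem~\ref{theo.1}); applying it to the first piece yields $V_\xi(k{+}1)-V_\xi(k)={-}\|\bm{e_\xi}(k)\|^2+2\bm{e_\xi}(k)^T\mathcal{M}^TW_{\mathcal{M}}\bm{s}(k)+\bm{s}(k)^TW_{\mathcal{M}}\bm{s}(k)$. Next I would bound the cross term by Young's inequality $2a^Tb\leq\tfrac12\|a\|^2+2\|b\|^2$ with $a=\bm{e_\xi}(k)$ and $b=\mathcal{M}^TW_{\mathcal{M}}\bm{s}(k)$, which contributes $\tfrac12\|\bm{e_\xi}(k)\|^2$ to be absorbed into the $-\|\bm{e_\xi}(k)\|^2$ term and leaves $2\|\mathcal{M}^TW_{\mathcal{M}}\|^2\|\bm{s}(k)\|^2$; the pure forcing term is bounded by $\|W_{\mathcal{M}}\|\,\|\bm{s}(k)\|^2$. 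Collecting these gives $V_\xi(k{+}1)-V_\xi(k)\leq{-}\tfrac12\|\bm{e_\xi}(k)\|^2+\bigl(2\|\mathcal{M}^TW_{\mathcal{M}}\|^2+\|W_{\mathcal{M}}\|\bigr)\|\bm{s}(k)\|^2$.

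The last step is to evaluate $\|\bm{s}(k)\|^2$ via the Kronecker-product norm identity $\|\bm{1}_{n_\mathrm{sum}}{\otimes}\bm{u}\|^2=n_\mathrm{sum}\|\bm{u}\|^2$, which gives $\|\bm{s}(k)\|^2=\beta^2 n_\mathrm{sum}\|\hat{L}\hat{\breve{L}}\bm{\psi}(k)\|^2$; the coefficient then collapses exactly to $\beta^2 b$ with $b=n_\mathrm{sum}\bigl(2\|\mathcal{M}^TW_{\mathcal{M}}\|^2+\|W_{\mathcal{M}}\|\bigr)$ as defined in Theorem~\ref{theo.2}, establishing the claimed inequality. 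I do not expect a genuine obstacle: the argument is a self-contained Lyapunov-difference computation, and the only structural input—the Schur property of $\mathcal{M}$ together with the solvability of its Lyapunov equation—is reused verbatim from the special case, since this lemma never touches the internal form of the forcing vector beyond its being a $\bm{1}_{n_\mathrm{sum}}$-lift. The only point requiring care is the bookkeeping of constants, so that the factor $n_\mathrm{sum}$ produced by the Kronecker lift lands precisely inside $b$ rather than being double-counted.
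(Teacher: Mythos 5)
Your proposal is correct and follows exactly the route the paper intends: the paper omits this proof precisely because it repeats the argument of Lemma~\ref{lemma.Vxi} (discrete Lyapunov identity for the Schur matrix $\mathcal{M}$, Young's inequality on the cross term, and the Kronecker-lift factor $n_\mathrm{sum}$ absorbed into $b$), with the forcing term $\alpha\hat{L}^2\breve{\mathcal{P}}(\bm{\xi}(k))$ replaced by $\beta\hat{L}\hat{\breve{L}}\bm{\psi}(k)$ via \dref{eq.e_xi.k{+}1.k.2}. Your constant bookkeeping also checks out, reproducing $b=n_\mathrm{sum}\bigl(2\|\mathcal{M}^TW_{\mathcal{M}}\|^2+\|W_{\mathcal{M}}\|\bigr)$ exactly as in Theorem~\ref{theo.2}.
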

The proof of this lemma is omitted, as it is similar to that of Lemma \ref{lemma.Vxi}.
\begin{lemma}\label{lemma.Vpsi}
	Under Assumption \ref{assp.graph} and  the proposed algorithm (\ref{eq.law.2}), for the function
	\begin{equation*}
	V_\psi(k)=\bm{e_\psi}(k)^TW_{c}\bm{e_{\psi}}(k),
	\end{equation*}
	where $W_{c}=\mathrm{diag}\{W_{c_1}{{\otimes}}I_{n_1},\cdots,W_{c_N}{{\otimes}}I_{n_N}\}$,
	the following inequality holds $\forall k\in\mathbb{N}$:
	\begin{equation*}\label{eq.V_psi.k{+}1.k.1}
	\begin{aligned}
	&V_{\psi}(k{{+}}1){{-}}V_{\psi}(k)\\
	{\leq}&{{-}}\frac{1}{2}\|\bm{e_\psi}(k)\|^2{{+}}2\max_{{i\in\mathcal{I},ij\in\mathcal{V}_i}}\{(2\|\bar{C}_i^TW_{c_i}\bar{I}_i\|^2{{+}}\|\bar{I}_i^T W_{c_i} \bar{I}_i\|)l_{ij}^2\}\\
	&\times\|I_{n_{\mathrm{sum}}}{{-}}\mathcal{M}\|^2\|\bm{e_\xi}(k)\|^2.
	\end{aligned}
	\end{equation*}
\end{lemma}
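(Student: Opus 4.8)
The plan is to exploit the block-diagonal structure of $W_c=\mathrm{diag}\{W_{c_1}{\otimes}I_{n_1},\cdots,W_{c_N}{\otimes}I_{n_N}\}$ to split the Lyapunov difference coalition by coalition and analyze each block through the error recursion (\ref{eq.e_psi.i.k{+}1.k}). Writing $V_\psi(k)=\sum_{i=1}^N V_{\psi_i}(k)$ with $V_{\psi_i}(k)=\bm{e_{\psi_i}}(k)^T(W_{c_i}{\otimes}I_{n_i})\bm{e_{\psi_i}}(k)$, it suffices to bound each $V_{\psi_i}(k{+}1){-}V_{\psi_i}(k)$ and then sum. Introducing the shorthand $\Delta\mathcal{Q}_i(k)\triangleq\mathcal{Q}_i(\bm{\xi}_i(k{+}1)){-}\mathcal{Q}_i(\bm{\xi}_i(k))$, I would substitute (\ref{eq.e_psi.i.k{+}1.k}) into $V_{\psi_i}(k{+}1)$ and expand the quadratic form into a pure $\bm{e_{\psi_i}}$ term carrying $\bar{C}_i^TW_{c_i}\bar{C}_i{\otimes}I_{n_i}$, a cross term carrying $\bar{C}_i^TW_{c_i}\bar{I}_i{\otimes}I_{n_i}$, and a pure $\Delta\mathcal{Q}_i$ term carrying $\bar{I}_i^TW_{c_i}\bar{I}_i{\otimes}I_{n_i}$.

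For the first term I would apply the Lyapunov identity $\bar{C}_i^TW_{c_i}\bar{C}_i{-}W_{c_i}={-}I_{n_i}$ established just before the lemma, turning it into $V_{\psi_i}(k){-}\|\bm{e_{\psi_i}}(k)\|^2$ and thereby supplying the negative definite contribution. The cross term I would dominate via Young's inequality $2a^Tb\leq\frac{1}{2}\|a\|^2{+}2\|b\|^2$, with the split chosen so that the $\frac{1}{2}\|\bm{e_{\psi_i}}(k)\|^2$ piece absorbs half of the $-\|\bm{e_{\psi_i}}(k)\|^2$ above, leaving $-\frac{1}{2}\|\bm{e_{\psi_i}}(k)\|^2$; using $\|A{\otimes}I\|=\|A\|$, the residual $2\|\bar{C}_i^TW_{c_i}\bar{I}_i\|^2\|\Delta\mathcal{Q}_i(k)\|^2$ together with the third term $\|\bar{I}_i^TW_{c_i}\bar{I}_i\|\|\Delta\mathcal{Q}_i(k)\|^2$ collects into the combined coefficient $(2\|\bar{C}_i^TW_{c_i}\bar{I}_i\|^2{+}\|\bar{I}_i^TW_{c_i}\bar{I}_i\|)\|\Delta\mathcal{Q}_i(k)\|^2$.

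The crux is then to trade the gradient increment $\|\Delta\mathcal{Q}_i(k)\|^2$ for the estimation error $\|\bm{e_\xi}(k)\|^2$. Since each block of $\mathcal{Q}_i$ is a partial gradient of some $f_{ij}$, Assumption \ref{assp.fij.lipschitz} gives $\|\Delta\mathcal{Q}_i(k)\|^2\leq\sum_{j=1}^{n_i}l_{ij}^2\|\bm{\xi}_{ij}(k{+}1){-}\bm{\xi}_{ij}(k)\|^2\leq\max_{ij\in\mathcal{V}_i}\{l_{ij}^2\}\|\bm{\xi}_i(k{+}1){-}\bm{\xi}_i(k)\|^2$. I would then establish the key identity $\bm{\xi}(k{+}1){-}\bm{\xi}(k)=(\mathcal{M}{-}I_{n_{\mathrm{sum}}^2})\bm{e_\xi}(k)$, obtained by subtracting consecutive iterates of (\ref{eq.law.xi.2}) and using $(\mathcal{M}{+}\hat{W})(\bm{1}_{n_\mathrm{sum}}{\otimes}\bm{x})=\bm{1}_{n_\mathrm{sum}}{\otimes}\bm{x}$ together with $\bm{e_\xi}=\bm{\xi}{-}\bm{1}_{n_\mathrm{sum}}{\otimes}\bm{x}$, which yields $\|\bm{\xi}_i(k{+}1){-}\bm{\xi}_i(k)\|\leq\|\bm{\xi}(k{+}1){-}\bm{\xi}(k)\|\leq\|I_{n_{\mathrm{sum}}^2}{-}\mathcal{M}\|\,\|\bm{e_\xi}(k)\|$.

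Finally I would sum over $i\in\mathcal{I}$: the $-\frac{1}{2}\|\bm{e_{\psi_i}}(k)\|^2$ terms reassemble into $-\frac{1}{2}\|\bm{e_\psi}(k)\|^2$, while, because the subvectors $\bm{\xi}_i$ partition $\bm{\xi}$ so that $\sum_{i}\|\bm{\xi}_i(k{+}1){-}\bm{\xi}_i(k)\|^2=\|\bm{\xi}(k{+}1){-}\bm{\xi}(k)\|^2$, the error terms collapse into the single coefficient $\max_{i\in\mathcal{I},ij\in\mathcal{V}_i}\{(2\|\bar{C}_i^TW_{c_i}\bar{I}_i\|^2{+}\|\bar{I}_i^TW_{c_i}\bar{I}_i\|)l_{ij}^2\}\,\|I_{n_{\mathrm{sum}}^2}{-}\mathcal{M}\|^2\,\|\bm{e_\xi}(k)\|^2$, which matches the stated bound (the extra factor $2$ in the statement being a harmless over-estimate). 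I expect the main obstacle to be the bookkeeping in the Kronecker-structured quadratic expansion together with the derivation of the increment identity for $\bm{\xi}$; once that identity is secured, the rest reduces to routine norm estimates.
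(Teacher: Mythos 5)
Your proposal is correct and matches the paper's proof essentially step for step: the same blockwise expansion of the quadratic form via the recursion for $\bm{e_{\psi_i}}$, the same use of the Lyapunov identity $\bar{C}_i^T W_{c_i}\bar{C}_i-W_{c_i}=-I_{n_i}$, the same Young split of the cross term into $\frac{1}{2}\|\bm{e_{\psi_i}}(k)\|^2+2\|\bar{C}_i^TW_{c_i}\bar{I}_i\|^2\|\Delta\mathcal{Q}_i(k)\|^2$, and the same Lipschitz bound trading the gradient increment for $\|\bm{\xi}(k{+}1)-\bm{\xi}(k)\|^2$. Your only deviation is the final step, where your exact identity $\bm{\xi}(k{+}1)-\bm{\xi}(k)=(\mathcal{M}-I_{n_{\mathrm{sum}}^2})\bm{e_\xi}(k)$ (which does follow from the $\bm{\xi}$-update together with the row-sum property of $\mathcal{M}+\hat{W}$) yields the bound without the factor $2$, which is tighter than and therefore implies the stated inequality, where the paper retains the factor $2$ as harmless slack.
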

The proof of Lemma \ref{lemma.Vpsi} is reported in Appendix \ref{proof.lemma.Vpsi}.

\begin{lemma}\label{lemma.barVx}
	Under Assumption \ref{assp.graph} and  the proposed algorithm (\ref{eq.law.2}), for the function
	\begin{equation*}
	\bar V_x(k)=\sum_{i=1}^N\frac{1}{2n_i}\left\|L_i\frac{\partial f_i}{\partial \bm{x}_i}(\bm{x}(k))\right\|^2,
	\end{equation*}
	the following inequality holds $\forall k\in\mathbb{N}$:
	\begin{equation*}
	\begin{aligned}
	&\bar V_x(k{{+}}1){-}\bar V_x(k)\\
	\leq&{-}(\frac{\mu}{\beta}{-}\sum_{i=1}^N\frac{l_i^2\|L_i\|^2}{n_i})\beta^2\left\|\hat{L}\hat{\breve{L}}\bm{\psi}({k})\right\|^2\\
	&{+}\max_{i\in\mathcal{I}}\big\{\frac{1}{n_i}(\sum_{j=1}^{n_i}l_{ij}^2)\left\|L_i\right\|^2\big\}\left\|\bm{e_\xi}(k)\right\|^2{+}\frac{\gamma_\psi}{4}\left\|\bm{e_\psi}(k)\right\|^2.
	\end{aligned}
	\end{equation*}
\end{lemma}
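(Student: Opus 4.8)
The plan is to adapt the descent argument of Lemma \ref{lemma.Vx} to the gradient-tracking algorithm, where two error sources must now be tracked simultaneously: the consensus (tracking) error $\bm{e_\psi}$ and the estimation error $\bm{e_\xi}$. Writing $g_i(k)\triangleq\frac{\partial f_i}{\partial \bm{x}_i}(\bm{x}(k))$, $\bm{d}(k)\triangleq\hat{L}\hat{\breve{L}}\bm{\psi}(k)$ (with blocks $\bm{d}_i=L_i\breve{L}_i\bm{\psi}_i$), and $\bm{e_{\xi_i}}\triangleq\bm{\xi}_i{-}\bm{1}_{n_i}{\otimes}\bm{x}$ for the coalition-$i$ block of $\bm{e_\xi}$, recall from (\ref{eq.x.k{+}1.k.2}) that $\bm{x}(k{+}1){-}\bm{x}(k)={-}\beta\bm{d}(k)$. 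First I would split each summand of $\bar V_x(k{+}1){-}\bar V_x(k)$ as
\begin{equation*}
\tfrac{1}{2n_i}\big(\|L_ig_i(k{+}1)\|^2{-}\|L_ig_i(k)\|^2\big)=\tfrac{1}{n_i}(L_i^2g_i(k))^T(g_i(k{+}1){-}g_i(k))+\tfrac{1}{2n_i}\|L_i(g_i(k{+}1){-}g_i(k))\|^2,
\end{equation*}
and bound the last (quadratic) term by the Lipschitz estimate $\|L_i(g_i(k{+}1){-}g_i(k))\|\le \beta l_i\|L_i\|\,\|\bm{d}(k)\|$ of Assumption \ref{assp.fij.lipschitz}, which contributes $\tfrac{\beta^2}{2}\sum_i\tfrac{l_i^2\|L_i\|^2}{n_i}\|\bm{d}(k)\|^2$.

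The heart of the argument is to re-express the cross term through the actual update direction. Using the identity $\breve{L}_i(\bm{1}_{n_i}{\otimes}\bm{v}){=}L_i\bm{v}$ together with $\bm{\psi}_i{=}\bm{1}_{n_i}{\otimes}\bar{\bm{\psi}}_i{+}\bm{e_{\psi_i}}$ and the tracking relations (\ref{eq.bar.psi.Q.i}) and (\ref{eq.bar.Q.i.gradient}), one obtains the decomposition
\begin{equation*}
\tfrac{1}{n_i}L_i^2g_i(k)=\bm{d}_i(k){-}\bm{q}_i(k){-}\bm{r}_i(k),
\end{equation*}
where $\bm{q}_i(k)=L_i^2\big(\bar{\mathcal{Q}}_i(\bm{\xi}_i(k)){-}\bar{\mathcal{Q}}_i(\bm{1}_{n_i}{\otimes}\bm{x}(k))\big)$ is driven by the estimation error and $\bm{r}_i(k)=L_i\breve{L}_i\bm{e_{\psi_i}}(k)$ by the tracking error. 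Substituting this into the cross term yields three pieces. The main piece $\sum_i\bm{d}_i^T(g_i(k{+}1){-}g_i(k))=\bm{d}(k)^T(\mathcal{P}(\bm{x}(k{+}1)){-}\mathcal{P}(\bm{x}(k)))$ is the only place where the strict monotonicity (Assumption \ref{assp.pseudo}) enters: since $\bm{d}(k)={-}\beta^{-1}(\bm{x}(k{+}1){-}\bm{x}(k))$, it is at most ${-}\mu\beta\|\bm{d}(k)\|^2$.

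For the two error pieces I would apply Cauchy--Schwarz followed by a weighted Young inequality, choosing the weight $2n_i/\|L_i\|^2$ in both, and invoke the Lipschitz bounds $\|\bm{q}_i(k)\|\le \tfrac{\|L_i\|^2}{n_i}(\sum_j l_{ij}^2)^{1/2}\|\bm{e_{\xi_i}}(k)\|$ (Assumption \ref{assp.fij.lipschitz} applied entrywise to $\bar{\mathcal{Q}}_i$), $\|\bm{r}_i(k)\|\le\|L_i\|\,\|\breve{L}_i\|\,\|\bm{e_{\psi_i}}(k)\|$, and $\|g_i(k{+}1){-}g_i(k)\|\le\beta l_i\|\bm{d}(k)\|$. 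With this choice the $\|\bm{e_{\xi_i}}\|^2$-coefficient collapses to $\tfrac{1}{n_i}(\sum_j l_{ij}^2)\|L_i\|^2$, the $\|\bm{e_{\psi_i}}\|^2$-coefficient to $n_i\|\breve{L}_i\|^2=\gamma_\psi/4$, and each of the two Young steps returns an extra $\tfrac{\beta^2}{4}\sum_i\tfrac{l_i^2\|L_i\|^2}{n_i}\|\bm{d}(k)\|^2$.

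Collecting the $\|\bm{d}(k)\|^2$ contributions, namely ${-}\mu\beta+(\tfrac12+\tfrac14+\tfrac14)\beta^2\sum_i\tfrac{l_i^2\|L_i\|^2}{n_i}={-}(\tfrac{\mu}{\beta}{-}\sum_i\tfrac{l_i^2\|L_i\|^2}{n_i})\beta^2$, and upper-bounding $\sum_i\|\bm{e_{\xi_i}}\|^2{=}\|\bm{e_\xi}\|^2$, $\sum_i\|\bm{e_{\psi_i}}\|^2{=}\|\bm{e_\psi}\|^2$ by the corresponding $\max_i$ of the error coefficients, reproduces exactly the claimed inequality. The main obstacle is the bookkeeping in the decomposition step: isolating $\bm{q}_i$ and $\bm{r}_i$ so that the monotonicity applies to the \emph{aggregated} direction $\bm{d}$ rather than to individual blocks, and then tuning the two Young weights so that the spurious $\|\bm{d}\|^2$ terms sum precisely to $\beta^2\sum_i l_i^2\|L_i\|^2/n_i$ instead of a looser multiple; everything else is routine Lipschitz and norm estimation.
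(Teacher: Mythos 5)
Your proposal is correct and follows essentially the same route as the paper's proof in Appendix D: the identical decomposition of $\tfrac{1}{n_i}L_i^2\frac{\partial f_i}{\partial \bm{x}_i}(\bm{x}(k))$ into the update direction $\bm{d}_i$ minus an $\bm{e_\xi}$-driven term and an $\bm{e_\psi}$-driven term via (\ref{eq.bar.psi.Q.i}), (\ref{eq.bar.Q.i.gradient}) and (\ref{eq.breve.Li.e.psi.i}), strict monotonicity applied to the aggregated cross term through $\bm{x}(k{+}1){-}\bm{x}(k)={-}\beta\hat{L}\hat{\breve{L}}\bm{\psi}(k)$, and the Lipschitz estimate (\ref{eq.ni.barQi-partial.f.i}). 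Your weighted Young inequality with weight $2n_i/\|L_i\|^2$ reproduces exactly the paper's unit-weight Young step after its final $\tfrac{1}{2n_i}$ normalization, so the two arguments yield the same constants term by term.
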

The proof of Lemma \ref{lemma.barVx} is given in Appendix \ref{proof.lemma.barVx}.

Now we are in the position to demonstrate Theorem \ref{theo.2}.\\
\textbf{Proof of Theorem \ref{theo.2}}:

Consider the following Lyapunov function
\begin{equation*}
\tilde V(k)=\bar V_x(k){+}\gamma_\psi V_\psi(k){+}\gamma_\xi V_\xi(k),
\end{equation*}
where $V_\xi(k)$, $V_\psi(k)$, and  $\bar V_x(k)$ have been defined in (\ref{eq.V.xi}), Lemmas \ref{lemma.Vpsi} and \ref{lemma.barVx} respectively, and $\gamma_\psi,\gamma_\xi$ have been given in Theorem \ref{theo.2}. One can obtain from (\ref{beta}) that $\beta\leq{\mu}/\Big({2\big(\sum_{i=1}^N({l_i^2\|L_i\|^2}/{n_i}){+}\gamma_\xi b\big)}\Big)$. Then,
combining  Lemmas \ref{lemma.Vxi.2}, \ref{lemma.Vpsi} and \ref{lemma.barVx}  yields
\begin{equation}\label{eq.V.k+1.k.2}
\begin{aligned}
&\tilde V({k{+}1}){{-}}\tilde V(k)\\
\leq&{-}\frac{\mu\beta}{2}\left\|\hat{L}\hat{\breve{L}}\bm{\psi}({k})\right\|^2{{-}}\frac{\gamma_\psi}{4}\|\bm{e_\psi}(k)\|^2{{-}}\frac{\gamma_\xi}{4}\|\bm{e_\xi}(k)\|^2.
\end{aligned}
\end{equation}
%
Noting by definitions of $\breve{L}_i$ and $\bm{e_{\psi_{i}}}$, one has
\begin{equation}\label{eq.breve.Li.e.psi.i}
\begin{aligned}
\breve{L}_i\bm{e_{\psi_i}}
=\breve{L}_i(\bm{\psi}_i{-}\bm{1}_{n_i}{\otimes}\bar{\bm{\psi}}_i)
=\breve{L}_i\bm{\psi}_i{-}{L}_i\bar{\bm{\psi}}_i.
\end{aligned}
\end{equation}
From (\ref{eq.bar.psi.Q.i}) and (\ref{eq.breve.Li.e.psi.i}), one can derive that
\begin{equation}\label{eq.-norm}
\begin{aligned}
&{-}\left\|\hat{L}\hat{\breve{L}}\bm{\psi}({k})\right\|^2={-}\sum_{i=1}^N\left\|L_i\breve{L}_i\bm{\psi}_{i}({k})\right\|^2\\
=&{-}\sum_{i=1}^N\bigg\|{L_i}\bigg(\breve{L}_i\bm{e_{\psi_i}}(k){+}{L}_i\bar{\mathcal{Q}}_i(\bm{\xi}_i){-}\frac{1}{n_i}L_i\frac{\partial f_i}{\partial \bm{x}_i}(\bm{x}(k))\bigg)\\
&{+}\frac{1}{n_i}L_i^2\frac{\partial f_i}{\partial \bm{x}_i}(\bm{x}(k))\bigg\|^2.
\end{aligned}
\end{equation}
Then, by taking similar steps as in (\ref{eq.hat L breve P}), one can get
\begin{equation}\label{eq.-norm}
\begin{aligned}
&{-}\left\|\hat{L}\hat{\breve{L}}\bm{\psi}({k})\right\|^2\\
\leq&\sum_{i=1}^N\bigg(\bigg\|{L_i}\big(\breve{L}_i\bm{e_{\psi_i}}(k){+}{L}_i\bar{\mathcal{Q}}_i(\bm{\xi}_i){-}\frac{1}{n_i}L_i\frac{\partial f_i}{\partial \bm{x}_i}(\bm{x}(k))\big)\bigg\|^2\\
&{-}\frac{1}{2n_i^2}\left\|L_i^2\frac{\partial f_i}{\partial \bm{x}_i}(\bm{x}(k)))\right\|^2\bigg)\\
\leq&\sum_{i=1}^N\bigg(2\|{L_i}\breve{L}_i\bm{e_{\psi_i}}(k)\|^2{+}2\bigg\|{L}_i^2\bar{\mathcal{Q}}_i(\bm{\xi}_i)\\
&{-}\frac{1}{n_i}L_i^2\frac{\partial f_i}{\partial \bm{x}_i}(\bm{x}(k))\bigg\|^2{-}\frac{\lambda_2(L_i^2)}{2n_i^2}\left\|L_i\frac{\partial f_i}{\partial \bm{x}_i}(\bm{x}(k)))\right\|^2\bigg)\\
\leq&2\max_{i\in\mathcal{I}}\{\|{L_i\breve{L}_i}\|^2\}\|\bm{e_\psi}({k})\|^2{-}\min_{i\in\mathcal{I}}\{\frac{\lambda_2(L_i^2)}{n_i}\}\bar V_x\\
&{+}2\max_{i\in\mathcal{I}}\{\frac{\|{L_i}\|^4\sum_{j=1}^{n_i}l_{ij}^2}{n_i^2}\}\|\bm{e_\xi}({k})\|^2,
\end{aligned}
\end{equation}
where  
(\ref{eq.ni.barQi{-}partial.f.i}) in the appendix  is used in the last step.
Substituting (\ref{eq.-norm}) back into (\ref{eq.V.k+1.k.2}) yields
\begin{equation*}
\begin{aligned}
&\tilde V({k{+}1}){{-}}\tilde V(k)\\
\leq&\mu\beta\max_{i\in\mathcal{I}}\{\|{L_i\breve{L}_i}\|^2\}\|\bm{e_\psi}({k})\|^2-\frac{\mu\beta}{2}\min_{i\in\mathcal{I}}\{\frac{\lambda_2(L_i^2)}{n_i}\}\bar V_x\\
&{+}\mu\beta\max_{i\in\mathcal{I}}\{\frac{\|{L_i}\|^4(\sum_{j=1}^{n_i}l_{ij}^2)}{n_i^2}\}\|\bm{e_\xi}({k})\|^2\\
&{{-}}\frac{\gamma_\psi}{4}\|\bm{e_\psi}(k)\|^2{{-}}\frac{\gamma_\xi}{4}\|\bm{e_\xi}(k)\|^2\\
\leq
&{-}\frac{\mu\beta}{2}\min_{i\in\mathcal{I}}\{\frac{\lambda_2(L_i^2)}{n_i}\}\bar V_x{{-}}\frac{\gamma_\psi}{8}\|\bm{e_\psi}(k)\|^2{{-}}\frac{\gamma_\xi}{8}\|\bm{e_\xi}(k)\|^2\\
\leq&- \varepsilon_2 \tilde V(k),
\end{aligned}
\end{equation*}
where
\begin{equation*}
\varepsilon_2=\min\left\{\frac{\mu\beta}{2}\min_{i\in\mathcal{I}}\{\frac{\lambda_2(L_i^2)}{n_i}\},\frac{1}{8\|W_{c}\|},\frac{1}{8\|W_{\mathcal{M}}\|}\right\},
\end{equation*}
and the second last inequality can be obtained since $\beta$ satisfies (\ref{beta}).
Then, by taking similar steps as in the proof of Theorem 1, one can derive that $\bm{x}$  will converge to $\bm{x}^*$ with a linear rate \textbf{$O((1{-}\varepsilon_2)^k)$} .	$\hfill\blacksquare$

\begin{remark}
Distinct from existing results on DRA over MANs with cooperative agents, the proposed algorithms (\ref{eq.law.1}) and (\ref{eq.law.2}) can deal with the DRA problem with conflicts of interest among the agents as well as the influence of some agents' decisions on other agents' individual benefits. Moreover, under proposed algorithms, the intra-coalition coupled equality constraints can  be satisfied at each iteration. Such a feature is favorable in online solving some practical problems such as economic dispatch of smart grid with multiple generating units, where balancing the power supply and demand while seeking the optimal solution is highly desired.
\end{remark}

\section{Numerical simulations}\label{sec.simulation}
\begin{figure}
	\centering
	\begin{tikzpicture}[-,>=stealth',scale=0.85,
	thick,
	node1/.style={circle,fill=blue!30,align=center,font=\small,scale=0.7}, node2/.style={circle,fill=green!40,align=center,minimum height=5pt,minimum  width=5pt,font=\small,scale=0.7},
	node3/.style={circle,fill=red!30,align=center,minimum height=5pt,minimum  width=5pt,font=\small,scale=0.7}]	
	
	\node[node2] (21) at(0,1) {21};
	\node[node2] (22) at(-1,0) {22};
	\node[node2] (25) at(1,0) {25};
	\node[node2] (23) at(-0.6,-1) {23};
	\node[node2] (24) at(0.6,-1) {24};

	\node[node1] (11) at(-3,1) {11};
	\node[node1] (13) at(-3,-1) {13};
	\node[node1] (12) at(-3.7,0) {12};
	\node[node1] (14) at(-2.3,0) {14};

	\node[node3] (31) at(3,1) {31};
	\node[node3] (36) at(3.9,1) {36};
	\node[node3] (33) at(3,-1) {33};
	\node[node3] (34) at(3.9,-1) {34};
	\node[node3] (32) at(2.3,0) {32};
	\node[node3] (35) at(4.6,0) {35};
	
	\draw[-,blue!60] (11) -- (12);
	\draw[-,blue!60] (11) -- (14);
	\draw[-,blue!60] (13) -- (12);
	
	\draw[-,green] (21) -- (22);
	\draw[-,green] (22) -- (23);
	\draw[-,green] (25) -- (24);
	\draw[-,green] (25) -- (21);
	
	\draw[-,red!60] (32) -- (33);
	\draw[-,red!60] (33) -- (36);
	\draw[-,red!60] (33) -- (31);
	\draw[-,red!60] (35) -- (36);
	\draw[-,red!60] (34) -- (35);

	\draw[-,black!60] (23) -- (13);
	\draw[-,black!60] (25) -- (32);
	
	\end{tikzpicture}
	\caption{The underlying network topology among the game participants in cases 1 and 2.}\label{fig.topology}
\end{figure}
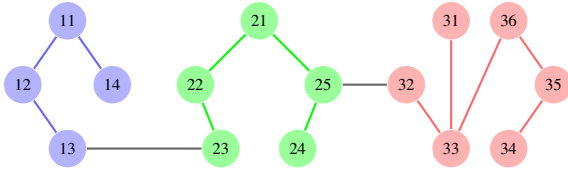

\begin{figure}
	\centering
	\includegraphics[width=3.4in]{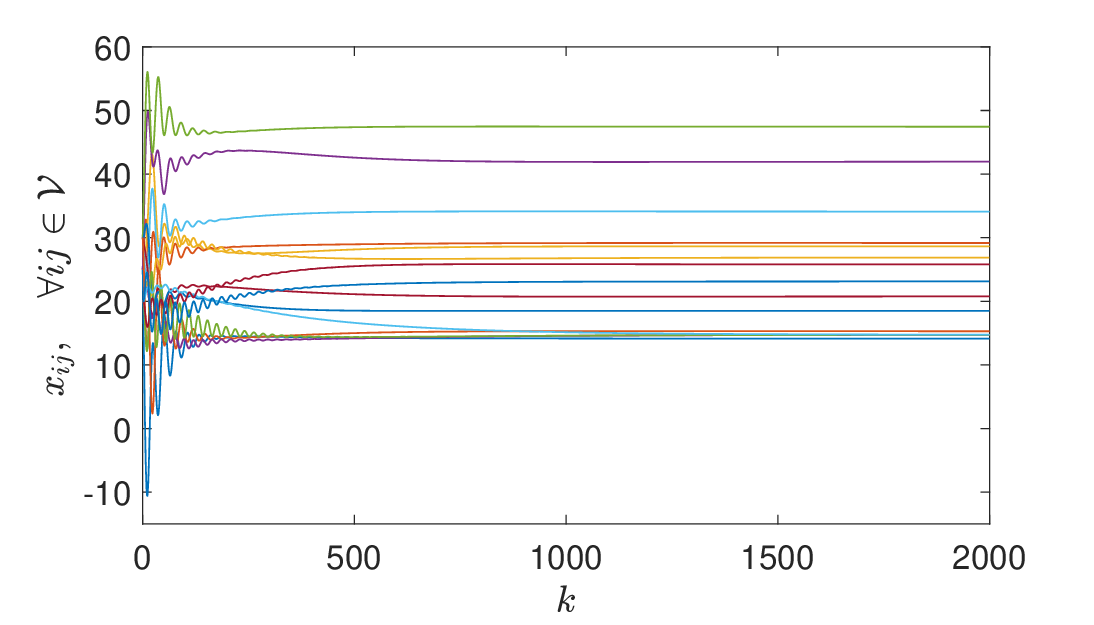}
	\caption{The agent states $x_{ij},\forall ij\in\mathcal{V}$ under the proposed algorithm (\ref{eq.law.1})  in Case 1.}
	\label{fig.xij1}
\end{figure}

\begin{figure}
	\centering
	\includegraphics[width=3.4in]{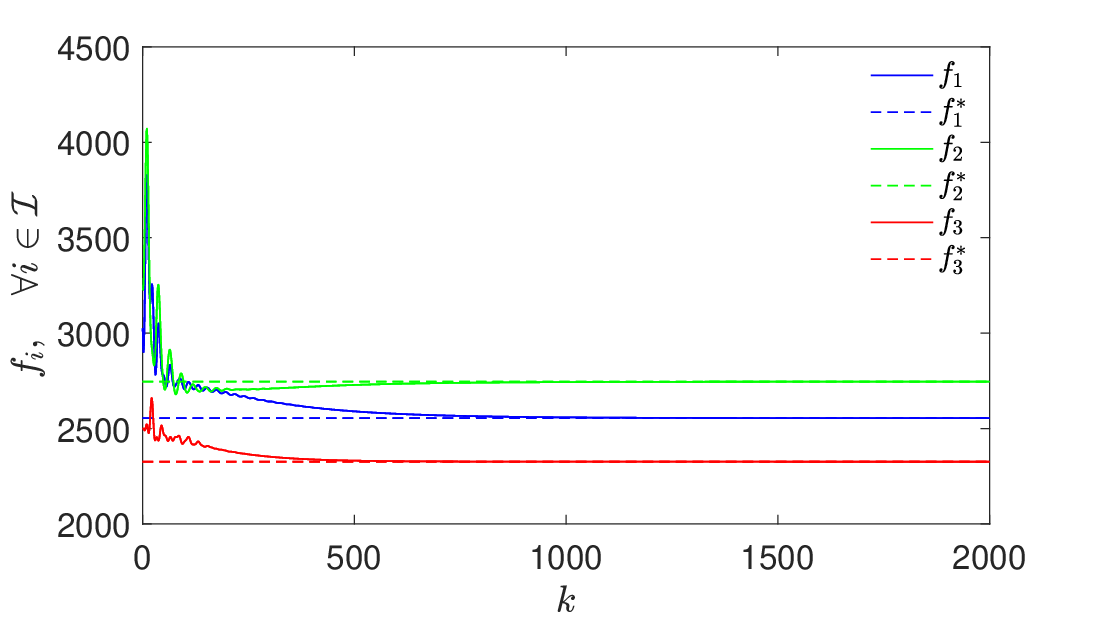}
	\caption{The values of coalition-level objective functions $f_i(\bm{x}),\forall i\in\mathcal{I}$ under the proposed algorithm (\ref{eq.law.1})  in Case 1.}
	\label{fig.fi1}
\end{figure}

\begin{figure}
	\centering
	\includegraphics[width=3.4in]{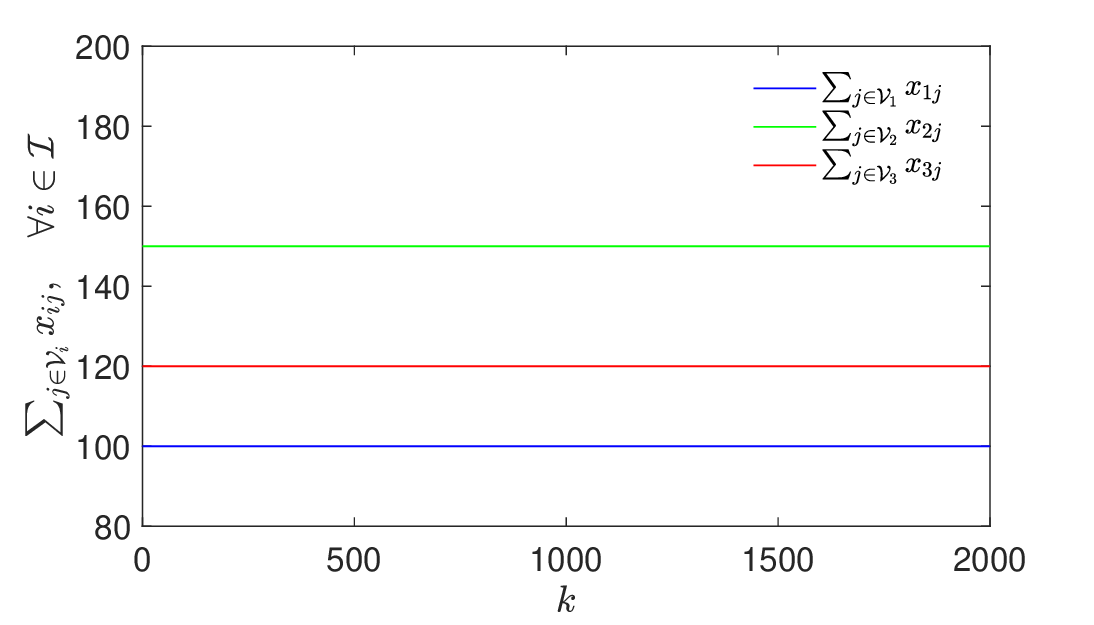}
	\caption{The sum of agent states within each coalition $\sum_{ij\in\mathcal{V}_i}x_{ij},\forall i\in\mathcal{I}$ under the proposed algorithm (\ref{eq.law.1})  in Case 1																																								.}
	\label{fig.sum_xij1}
\end{figure}

\begin{figure}
	\centering
	\includegraphics[width=3.4in]{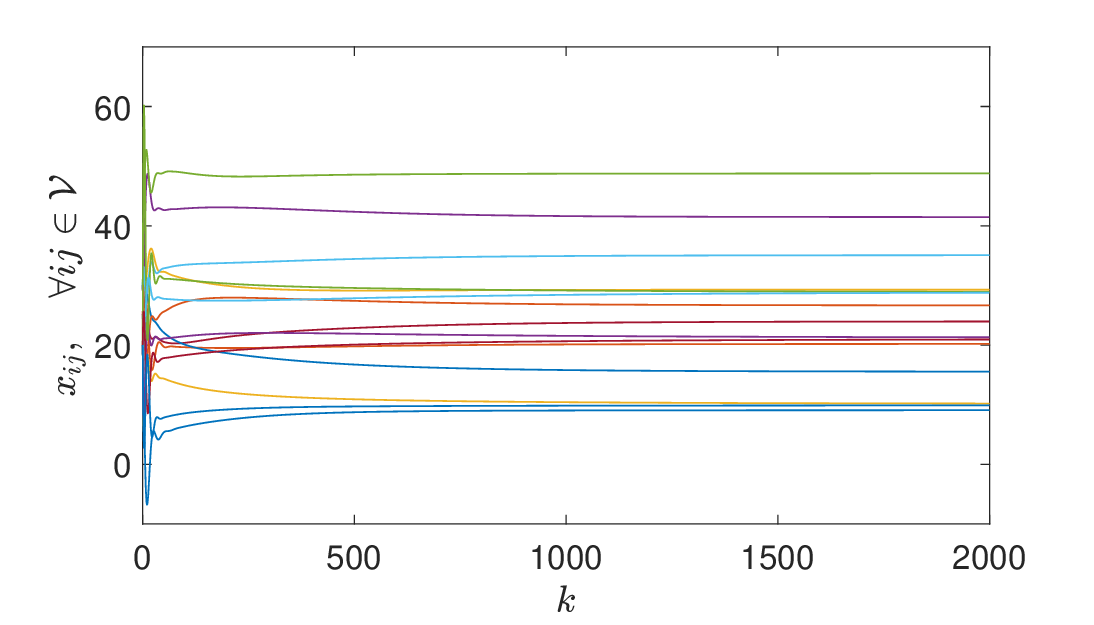}
	\caption{The agent states $x_{ij},\forall ij\in\mathcal{V}$ under the proposed algorithm  (\ref{eq.law.2}) in Case 2.}
	\label{fig.xij2}
\end{figure}

\begin{figure}
	\centering
	\includegraphics[width=3.4in]{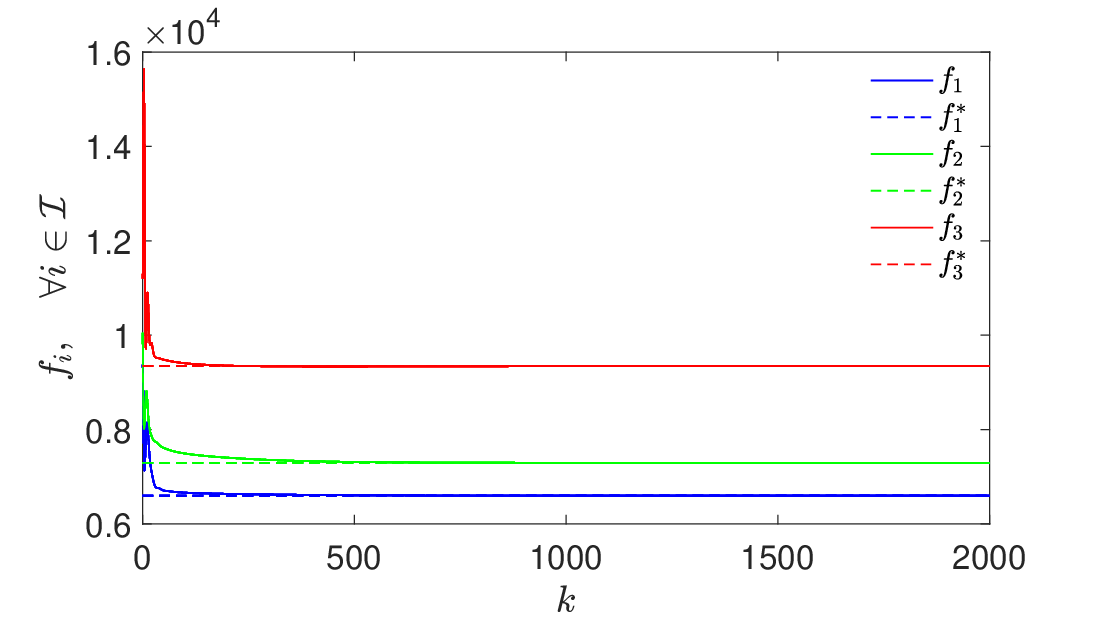}
	\caption{The values of coalition-level objective functions $f_i(\bm{x}),\forall i\in\mathcal{I}$ under the proposed algorithm  (\ref{eq.law.2}) in Case 2.}
	\label{fig.fi2}
\end{figure}

\begin{figure}
	\centering
	\includegraphics[width=3.4in]{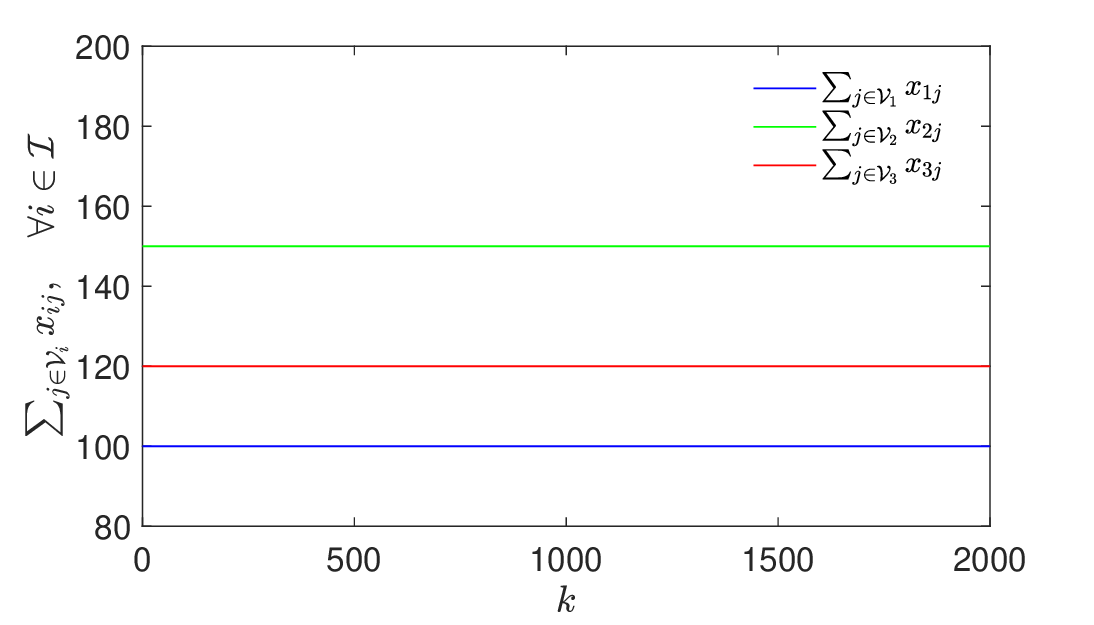}
	\caption{The sum of agent states within each coalition $\sum_{ij\in\mathcal{V}_i}x_{ij},\forall i\in\mathcal{I}$ under the proposed algorithm  (\ref{eq.law.2}) in Case 2																																								.}
	\label{fig.sum_xij2}
\end{figure}

Numerical examples are provided in this section to test the effectiveness of the proposed algorithms. 
Consider three coalitions  that contain  four, five and six agents respectively, i.e., $N=3$, $\mathcal{I}=\{1,2,3\}$, $n_1=4$, $n_2=5$, $n_3=6$. The network topology is shown in Fig. \ref{fig.topology}.
In the following, we consider two cases that can be described by model (\ref{eq.problem1}) and (\ref{eq.problem2}) respectively.

In Case 1, the objective function of each agent $ij\in\mathcal{V}$ is
\begin{equation*}
f_{ij}(x_{ij},\bm{x}_{-i})=(x_{ij}-b_{ij})^2{+}\frac{1}{2}x_{ij}y_{ij},
\end{equation*}
where
$y_{11}{=}x_{31},~y_{12}{=}x_{21}+x_{32},~y_{13}{=}x_{22}+x_{33},~y_{14}{=}x_{23}+x_{34},~ y_{21}{=}x_{12}+x_{32},~y_{22}{=}x_{13}+x_{33},~y_{23}{=}x_{14}+x_{34},~y_{24}{=}x_{35},~y_{25}{=}x_{36},~y_{31}{=}x_{11},~y_{32}{=}x_{12}+x_{21},~y_{33}{=}x_{13}+x_{22},~y_{34}{=}x_{14}+x_{23},~y_{35}{=}x_{24},~y_{36}{=}x_{25}$, and
$b_{11}{=}20$, $b_{12}{=}30$, $b_{13}{=}40$, $b_{14}{=}50$, $b_{21}{=}50$, $b_{22}{=}40$, $b_{23}{=}30$,
$b_{24}{=}20$, $b_{25}{=}30$, $b_{31}{=}b_{32}{=}b_{33}{=}b_{34}{=}b_{35}{=}b_{36}{=}20$.
The quantities of resources in the three coalitions are $R_1=100$, $R_2=150$, and $R_3=120$, respectively.
 Note that in this case, the inputs of the  individual objective function for each agent include only  the states of itself and agents in other coalitions.  One can directly calculate out  the NE
$\bm{x}^*{=}[14.12$,
$15.29$,
$28.63$,
$41.96$,
$47.44$,
$34.11$,
$20.78$,
$18.5$,
$29.17$,
$26.89$,
$14.73$,
$14.73$,
$14.73$,
$25.79$,
$23.12]^T$ and the values of the coalition-level objective functions at the NE  $f_1^*{=}2554,~
f_2^*{=}2746,~ f_3^*{=}2326$. The initial collective state is
$\bm{x}(0){=}[25,25,25,25,30,30,30,30,30,20,20,20,20,20,20]^T$. We employ the proposed algorithm (\ref{eq.law.1}) for the special case with the algorithm parameter set as
$\alpha{=}0.02$. The simulation result is presented in Figs. \ref{fig.xij1}-\ref{fig.sum_xij1}, which show that the agent states converge fast to the NE and the resource constraints are satisfied during the whole process.

In Case 2, the objective function of each agent $ij\in\mathcal{V}$ is
\begin{equation*}
f_{ij}(\bm{x})=5(x_{ij}-d_{ij})^2{+}\frac{1}{2}x_{ij}y_{ij},
\end{equation*}
where
$y_{11}{=}x_{12}+x_{21}+x_{31}+x_{32},~y_{12}{=}x_{11}+x_{21}+x_{31}+x_{32},~y_{13}{=}x_{22}+x_{23}+x_{33}+x_{34},~y_{14}{=}x_{24}+x_{25}+x_{35}+x_{36},~ y_{21}{=}x_{11}+x_{12}+x_{31}+x_{32},~y_{22}{=}x_{13}+x_{23}+x_{33}+x_{34},~y_{23}{=}x_{13}+x_{22}+x_{33}+x_{34},~y_{24}{=}x_{14}+x_{25}+x_{35}+x_{36},~y_{25}{=}x_{14}+x_{24}+x_{35}+x_{36},~y_{31}{=}x_{11}+x_{12}+x_{21}+x_{32},~y_{32}{=}x_{11}+x_{12}+x_{21}+x_{31},~y_{33}{=}x_{13}+x_{22}+x_{23}+x_{34},~y_{34}{=}x_{13}+x_{22}+x_{23}+x_{33},~y_{35}{=}x_{14}+x_{24}+x_{25}+x_{36},~y_{36}{=}x_{14}+x_{24}+x_{25}+x_{35}$, and
$d_{11}{=}20$, $d_{12}{=}30$, $d_{13}{=}40$, $d_{14}{=}50$, $d_{21}{=}50$, $d_{22}{=}40$, $d_{23}{=}30$,
$d_{24}{=}20$, $d_{25}{=}30$, $d_{31}{=}d_{36}{=}20$, $d_{32}{=}d_{35}{=}30$, $d_{33}{=}d_{34}{=}40$.   The quantities of resources in the three coalitions are the same as in Case 1.
By direct calculation, one can obtain the NE
$\bm{x}^*{=}[9.08$,
$20.19$,
$29.27$,
$41.46$,
$48.78$,
$35.07$,
$23.96$,
$15.54$,
$26.65$,
$10.14$,
$21.25$,
$28.87$,
$28.87$,
$21.0$,
$9.89]^T$, and the values of the coalition-level objective functions at the NE are $f_1^*{=}6598,~
f_2^*{=} 7295,~ f_3^*=9347$, respectively.
The proposed algorithm (\ref{eq.law.2}) for the general case is employed with the algorithm parameter set as
$\beta{=}0.01$, and the  simulation results are  presented in Figs. \ref{fig.xij2}-\ref{fig.sum_xij2}, showing that the agent states achieve fast convergence to the NE while satisfying the resource constraints.


\section{Conclusion}\label{sec.conclusion}

In this paper, the problem of distributed resource allocation over multiple interacting coalitions is investigated by developing game-theoretic approaches.  To characterize the cooperation of individual agents on resource allocation in each coalition as well as the conflicts of interest among different coalitions, a new type of multi-coalition game is formulated.
Inspired by techniques such as variable replacement,
gradient tracking and leader-following consensus, two new kinds of
 DRA algorithms are developed respectively for
the scenarios where the individual benefit of each agent
explicitly depends on the states of itself and some agents
in other coalitions, and on the states of all the game participants. One favourable feature of the designed DRA algorithms is that the resource constraints can be satisfied during the whole allocation process.
Furthermore, linear convergence of the proposed DRA algorithms is successfully  established. In the future, we will consider the cases with directed topologies and time-varying objective functions.
\appendix


\subsection{Proof of Lemma \ref{lemma.Vxi}}\label{proof.lemma.Vxi}
From (\ref{eq.e_xi.k{+}1.k}), one has
\begin{equation*}\label{eq.V.xi.k{+}1.k}
\begin{aligned}
&V_{\xi}({k{+}1}){{-}}V_{\xi}(k)\\
{=}&\bm{e_\xi}^T(t_{k})(\mathcal{M}^T W_\mathcal{M}\mathcal{M} {{-}}W_\mathcal{M} )\bm{e_{\xi}}(t_{k})\\
&{{+}}2\bm{e_\xi}^T(t_{k})\mathcal{M}^T W_\mathcal{M}(\bm{1}_{n_\mathrm{sum}}{{\otimes}}(\alpha\hat{L}^2\breve{\mathcal{P}}(\bm{\xi}({k}))))\\
&{{+}}(\bm{1}_{n_\mathrm{sum}}{{\otimes}}(\alpha\hat{L}^2\breve{\mathcal{P}}(\bm{\xi}({k}))))^TW_\mathcal{M}(\bm{1}_{n_\mathrm{sum}}{{\otimes}}(\alpha\hat{L}^2\breve{\mathcal{P}}(\bm{\xi}({k}))))\\
{\leq}&{{-}}\|\bm{e_\xi}(k)\|^2{{+}}2\sqrt{n_\mathrm{sum}}\|\mathcal{M}^T W_\mathcal{M}\|\|\bm{e_\xi}(k)\|\|\alpha\hat{L}^2\breve{\mathcal{P}}(\bm{\xi}({k}))\|\\
&{{+}}n_{\mathrm{sum}}\|W_\mathcal{M}\|\|\alpha\hat{L}^2\breve{\mathcal{P}}(\bm{\xi}({k}))\|^2\\
{\leq}&
{{-}}\frac{1}{2}\|\bm{e_\xi}(k)\|^2{{+}}\alpha^2b\|\hat{L}^2\breve{\mathcal{P}}(\bm{\xi}({k}))\|^2.
\end{aligned}
\end{equation*}

\subsection{Proof of Lemma \ref{lemma.Vx}}\label{proof.lemma.Vx}
First, one can easily obtain
\begin{equation}\label{eq.Li.partial fi.k{+}1.k}
\begin{aligned}
&\left\|L_i\frac{\partial f_i}{\partial \bm{x}_i}(\bm{x}(k+1))\right\|^2{-}\left\|L_i\frac{\partial f_i}{\partial \bm{x}_i}(\bm{x}(k))\right\|^2\\
=&\left\|L_i\frac{\partial f_i}{\partial \bm{x}_i}(\bm{x}(k+1)){-}L_i\frac{\partial f_i}{\partial \bm{x}_i}(\bm{x}(k))\right\|^2\\
&{+}2\left(L_i\frac{\partial f_i}{\partial \bm{x}_i}(\bm{x}(k+1)){-}L_i\frac{\partial f_i}{\partial \bm{x}_i}(\bm{x}(k))\right)^T\\
&\times L_i\frac{\partial f_i}{\partial \bm{x}_i}(\bm{x}(k)).
\end{aligned}
\end{equation}
From (\ref{eq.x.i.k{+}1.k}), one can derive that
\begin{equation*}
\begin{aligned}
&2\left(L_i\frac{\partial f_i}{\partial \bm{x}_i}(\bm{x}(k+1)){-}L_i\frac{\partial f_i}{\partial \bm{x}_i}(\bm{x}(k))\right)^TL_i\frac{\partial f_i}{\partial \bm{x}_i}(\bm{x}(k))\\
=&2\left(L_i\frac{\partial f_i}{\partial \bm{x}_i}(\bm{x}(k+1)){-}L_i\frac{\partial f_i}{\partial \bm{x}_i}(\bm{x}(k))\right)^TL_i\bigg(\breve{\mathcal{P}}_i(\bm{\xi}_i)\\
&{+}\frac{\partial f_i}{\partial \bm{x}_i}(\bm{x}(k)){-}\breve{\mathcal{P}}_i(\bm{\xi}_i)\bigg)\\
=&{-}\frac{2}{\alpha}\left(\frac{\partial f_i}{\partial \bm{x}_i}(\bm{x}(k+1)){-}\frac{\partial f_i}{\partial \bm{x}_i}(\bm{x}(k))\right)^T\left({\bm{x}}_i(k{{+}}1){{-}}{\bm{x}}_i(k)\right)\\
&{+}2\left(L_i\frac{\partial f_i}{\partial \bm{x}_i}(\bm{x}(k+1)){-}L_i\frac{\partial f_i}{\partial \bm{x}_i}(\bm{x}(k))\right)^T\\
&\times L_i\bigg(\frac{\partial f_i}{\partial \bm{x}_i}(\bm{x}(k)){-}\breve{\mathcal{P}}_i(\bm{\xi}_i)\bigg)\\
\leq&{-}\frac{2}{\alpha}\left(\frac{\partial f_i}{\partial \bm{x}_i}(\bm{x}(k+1)){-}\frac{\partial f_i}{\partial \bm{x}_i}(\bm{x}(k))\right)^T\left({\bm{x}}_i(k{{+}}1){{-}}{\bm{x}}_i(k)\right)\\
&{+}\left\|L_i\frac{\partial f_i}{\partial \bm{x}_i}(\bm{x}(k+1)){-}L_i\frac{\partial f_i}{\partial \bm{x}_i}(\bm{x}(k))\right\|^2\\
&{+}\left\|L_i\bigg(\frac{\partial f_i}{\partial \bm{x}_i}(\bm{x}(k)){-}\breve{\mathcal{P}}_i(\bm{\xi}_i)\bigg)\right\|^2.
\end{aligned}
\end{equation*}
Note that under Assumption \ref{assp.fij.lipschitz}, one has
\begin{equation}\label{eq.breve.Pi-partial.fi}
\begin{aligned}
&\left\|\breve{\mathcal{P}}_i(\bm{\xi}_i(k)){{-}}\frac{\partial f_i}{\partial \bm{x}_i}(\bm{x}(k))\right\|\\
=&\sqrt{\sum\limits_{j=1}^{n_i}\bigg(\frac{\partial f_{i}}{\partial {x}_{ij}}(\bm{\xi}_{ij}(k)){{-}}\frac{\partial f_{i}}{\partial {x}_{ij}}(\bm{x}(k))\bigg)^2}\\
\leq&\sqrt{\sum\limits_{j=1}^{n_i}\big\|\nabla f_{i}(\bm{\xi}_{ij}(k)){{-}}\nabla f_i(\bm{x}(k))\big\|^2}\\
\leq &l_i\left\|\bm{\xi}_{i}(k){{-}}\bm{1}_{n_i}{\otimes}\bm{x}(k)\right\|.
\end{aligned}
\end{equation}
Combining the above three formulas yields
\begin{equation*}\label{eq.Vx.k{+}1.k}
\begin{aligned}
&V_x(k{{+}}1){-}V_x(k)\\
\leq&{-}\frac{2}{\alpha}(\mathcal{P}(\bm{x}(k{{+}}1)){-}\mathcal{P}(\bm{x}(k)))^T({\bm{x}}(k{{+}}1){-}{\bm{x}}(k))\\
&{+}2\sum_{i=1}^N\left\|L_i\frac{\partial f_i}{\partial \bm{x}_i}(\bm{x}(k+1)){-}L_i\frac{\partial f_i}{\partial \bm{x}_i}(\bm{x}(k))\right\|^2\\
&{+}\sum_{i=1}^N\left\|L_i\bigg(\frac{\partial f_i}{\partial \bm{x}_i}(\bm{x}(k)){-}\breve{\mathcal{P}}_i(\bm{\xi}_i)\bigg)\right\|^2\\
\leq&{-}\frac{2\mu}{\alpha}\left\|{\bm{x}}(k{{+}}1){{-}}{\bm{x}}(k)\right\|^2\\
&{+}2\sum_{i=1}^N\Big(l_i^2\|L_i\|^2\|{\bm{x}}(k{{+}}1){{-}}{\bm{x}}(k)\|^2\Big)\\
&{+}\sum_{i=1}^N\Big(l_i^2\left\|L_i\right\|^2  \left\|\bm{\xi}_{i}(k){{-}}\bm{1}_{n_i}{\otimes}\bm{x}(k)\right\|^2\Big)\\
\leq&
{-}2\Big({\mu}{-}{\alpha}\sum_{i=1}^N(l_i^2\|L_i\|^2)\Big)\alpha\left\|\hat{L}^2\breve{\mathcal{P}}(\bm{\xi}({k}))\right\|^2\\
&{+}\max_{i\in\mathcal{I}}\{l_i^2\|L_i\|^2\}\left\|\bm{e_\xi}(k)\right\|^2,
\end{aligned}
\end{equation*}
which completes the proof.

\subsection{Proof of Lemma \ref{lemma.Vpsi}}\label{proof.lemma.Vpsi}

One can derive from the iteration of $\bm{e_{\psi_i}}$ in (\ref{eq.e_psi.i.k{+}1.k})  that
\begin{equation*}\label{eq.V_psi.k{+}1.k.0}
\begin{aligned}
&V_{\psi}(k{{+}}1){{-}}V_{\psi}(k)\\
=&\sum_{i=1}^N\bigg(\bm{e_{\psi_i}}^T(k)\big((\bar{C}_i^TW_{c_i}\bar{C}_i{-}W_{c_i}){{\otimes}}I_{n_i}\big)\bm{e_{\psi_i}}(k)\\
&{+}2\bm{e_{\psi_i}}^T(k)(\bar{C}_i^TW_{c_i}\bar{I}_i{{\otimes}}I_{n_i})\left(\mathcal{Q}_i(\bm{\xi}_i(k{{+}}1)){{-}}\mathcal{Q}_i(\bm{\xi}_i({k}))\right)\\
&{+}\left(\mathcal{Q}_i(\bm{\xi}_i(k{{+}}1)){{-}}\mathcal{Q}_i(\bm{\xi}_i({k}))\right)^T(\bar{I}_i^T W_{c_i} \bar{I}_i{{\otimes}}I_{n_i})\\
&{\times}\left(\mathcal{Q}_i(\bm{\xi}_i(k{{+}}1)){{-}}\mathcal{Q}_i(\bm{\xi}_i({k}))\right)\bigg)\\
{\leq}&\sum_{i=1}^N\bigg({-}\|\bm{e_{\psi_i}}(k)\|^2{+}\frac{1}{2}\|\bm{e_{\psi_i}}(k)\|^2\\
&{+}2\|\bar{C}_i^TW_{c_i}\bar{I}_i\|^2\|\mathcal{Q}_i(\bm{\xi}_i(k{{+}}1)){{-}}\mathcal{Q}_i(\bm{\xi}_i({k}))\|^2\\
&{+}\|\bar{I}_i^T W_{c_i} \bar{I}_i\|\|\mathcal{Q}_i(\bm{\xi}_i(k{{+}}1)){{-}}\mathcal{Q}_i(\bm{\xi}_i({k}))\|^2\bigg)\\
=&{{-}}\frac{1}{2}\|\bm{e_\psi}(k)\|^2{+}\sum_{i=1}^N(2\|\bar{C}_i^TW_{c_i}\bar{I}_i\|^2{+}\|\bar{I}_i^T W_{c_i} \bar{I}_i\|)\\
&\times\|\mathcal{Q}_i(\bm{\xi}_i(k{{+}}1)){{-}}\mathcal{Q}_i(\bm{\xi}_i({k}))\|^2.
\end{aligned}
\end{equation*}
Under Assumption \ref{assp.fij.lipschitz}, one has $$\left\|\mathcal{Q}_i(\bm{\xi}_i(k{{+}}1)){{-}}\mathcal{Q}_i(\bm{\xi}_i({k}))\right\|^2
{\leq}\max_{ij\in\mathcal{V}_i}\{l_{ij}^2\}\left\|\bm{\xi}_i(k{{+}}1){{-}}\bm{\xi}_i({k})\right\|^2.$$
Combining the above inequalities yields
\begin{equation*}\label{eq.V_psi.k{+}1.k.1}
\begin{aligned}
&V_{\psi}(k{{+}}1){{-}}V_{\psi}(k)\\
{\leq}&{{-}}\frac{1}{2}\|\bm{e_\psi}(k)\|^2{+}\sum_{i=1}^N(2\|\bar{C}_i^TW_{c_i}\bar{I}_i\|^2{+}\|\bar{I}_i^T W_{c_i} \bar{I}_i\|)\\
&\times\max_{ij\in\mathcal{V}_{i}}\{l_{ij}^2\}\|\bm{\xi}_i(k{{+}}1){{-}}\bm{\xi}_i({k})\|^2\\
{\leq}&{{-}}\frac{1}{2}\|\bm{e_\psi}(k)\|^2{+}\max_{i\in\mathcal{I},ij\in\mathcal{V}_{i}}\{(2\|\bar{C}_i^TW_{c_i}\bar{I}_i\|^2{+}\|\bar{I}_i^T W_{c_i} \bar{I}_i\|)l_{ij}^2\}\\
&\times\|\bm{\xi}(k{{+}}1){{-}}\bm{\xi}({k})\|^2\\
{\leq}&{{-}}\frac{1}{2}\|\bm{e_\psi}(k)\|^2{+}2\max_{i\in\mathcal{I},ij\in\mathcal{V}_{i}}\{(2\|\bar{C}_i^TW_{c_i}\bar{I}_i\|^2{+}\|\bar{I}_i^T W_{c_i} \bar{I}_i\|)l_{ij}^2\}\\
&\times\|I_{n_{\mathrm{sum}}}{{-}}\mathcal{M}\|^2\|\bm{e_\xi}(k)\|^2,
\end{aligned}
\end{equation*}
where the last inequality is obtained by recalling (\ref{eq.exi.def}), (\ref{eq.x.k{+}1.k.2}), and (\ref{eq.e_xi.k{+}1.k.2}).

\subsection{Proof of Lemma \ref{lemma.barVx}}\label{proof.lemma.barVx}

One can derive that
\begin{equation*}
\begin{aligned}
&2\left(L_i\frac{\partial f_i}{\partial \bm{x}_i}(\bm{x}(k+1)){-}L_i\frac{\partial f_i}{\partial \bm{x}_i}(\bm{x}(k))\right)^TL_i\frac{\partial f_i}{\partial \bm{x}_i}(\bm{x}(k))\\
=&2\left(L_i\frac{\partial f_i}{\partial \bm{x}_i}(\bm{x}(k+1)){-}L_i\frac{\partial f_i}{\partial \bm{x}_i}(\bm{x}(k))\right)^T\bigg(n_i\breve{L}_i\bm{\psi}_i(k)\\
&{+}n_iL_i{\bar{{\bm{\psi}}}}_i(k){-}n_i\breve{L}_i\bm{\psi}_i(k)\\
&{+}L_i\frac{\partial f_i}{\partial \bm{x}_i}(\bm{x}(k)){-}n_iL_i\bar{\mathcal{Q}}_i(\bm{\xi}_i({k}))\bigg)\\
=&{-}\frac{2n_i}{\beta}\left(\frac{\partial f_i}{\partial \bm{x}_i}(\bm{x}(k+1)){-}\frac{\partial f_i}{\partial \bm{x}_i}(\bm{x}(k))\right)^T\left({\bm{x}}_i(k{{+}}1){{-}}{\bm{x}}_i(k)\right)\\
&{+}2\left(L_i\frac{\partial f_i}{\partial \bm{x}_i}(\bm{x}(k+1)){-}L_i\frac{\partial f_i}{\partial \bm{x}_i}(\bm{x}(k))\right)^T\\
&\times\bigg(L_i\left(\frac{\partial f_i}{\partial \bm{x}_i}(\bm{x}(k)){-}n_i\bar{\mathcal{Q}}_i(\bm{\xi}_i({k}))\right){-}n_i\breve{L}_i\bm{e_{\psi_i}}(k)\bigg)\\
\leq&{-}\frac{2n_i}{\beta}\left(\frac{\partial f_i}{\partial \bm{x}_i}(\bm{x}(k+1)){-}\frac{\partial f_i}{\partial \bm{x}_i}(\bm{x}(k))\right)^T\left({\bm{x}}_i(k{{+}}1){{-}}{\bm{x}}_i(k)\right)\\
&{+}\left\|L_i\frac{\partial f_i}{\partial \bm{x}_i}(\bm{x}(k+1)){-}L_i\frac{\partial f_i}{\partial \bm{x}_i}(\bm{x}(k))\right\|^2\\
&{+}2\left\|L_i\left(\frac{\partial f_i}{\partial \bm{x}_i}(\bm{x}(k)){-}n_i\bar{\mathcal{Q}}_i(\bm{\xi}_i({k}))\right)\right\|^2{+}2\left\|n_i\breve{L}_i\bm{e_{\psi_i}}(k)\right\|^2,\\
\end{aligned}
\end{equation*}
where the first equality is obtained from (\ref{eq.bar.psi.Q.i}), the second equality is obtained from (\ref{eq.law.x.i.2}), (\ref{eq.law.eta.i.2}) and (\ref{eq.breve.Li.e.psi.i}).
Combining the above formula and (\ref{eq.Li.partial fi.k{+}1.k}) yields
\begin{equation*}
\begin{aligned}
&\left\|L_i\frac{\partial f_i}{\partial \bm{x}_i}(\bm{x}(k+1))\right\|^2{-}\left\|L_i\frac{\partial f_i}{\partial \bm{x}_i}(\bm{x}(k))\right\|^2\\
\leq&{{-}}\frac{2n_i}{\beta}\left(\frac{\partial f_i}{\partial \bm{x}_i}(\bm{x}(k+1)){-}\frac{\partial f_i}{\partial \bm{x}_i}(\bm{x}(k))\right)^T\left({\bm{x}}_i(k{{+}}1){{-}}{\bm{x}}_i(k)\right)\\
&{{+}}2\left\|L_i\frac{\partial f_i}{\partial \bm{x}_i}(\bm{x}(k+1)){{-}}L_i\frac{\partial f_i}{\partial \bm{x}_i}(\bm{x}(k))\right\|^2\\
&{{+}}2\left\|L_i\Big(\frac{\partial f_i}{\partial \bm{x}_i}(\bm{x}(k)){{-}}n_i\bar{\mathcal{Q}}_i(\bm{\xi}_i({k}))\Big)\right\|^2{{+}}2\left\|n_i\breve{L}_i\bm{e_{\psi_i}}(k)\right\|^2.\\
\end{aligned}
\end{equation*}
Under Assumption \ref{assp.fij.lipschitz}, one has
\begin{equation}\label{eq.ni.barQi{-}partial.f.i}
\begin{aligned}
&\left\|\frac{\partial f_i}{\partial \bm{x}_i}(\bm{x}(k)){-}n_i\bar{\mathcal{Q}}_i(\bm{\xi}_i({k}))\right\|\\
{\leq}&\sum\limits_{j=1}^{n_i}\bigg\|\frac{\partial f_{ij}(\bm{\xi}_{ij}(k))}{\partial \bm{x}_i}{{-}}\frac{\partial f_{ij}(\bm{{x}}(k))}{\partial \bm{x}_i}\bigg\|\\
{\leq}&\sum\limits_{j=1}^{n_i}l_{ij}\|\bm{\xi}_{ij}(k){{-}}\bm{{x}}(k)\|\\
\leq&\sqrt{\sum_{j=1}^{n_i}l_{ij}^2}\cdot\big\|\bm{e_{\xi_i}}(k)\big\|.
\end{aligned}
\end{equation}
Based on the above formulas, the following can be derived:
\begin{equation*}
\begin{aligned}
&\bar V_x(k{{+}}1){-}\bar V_x(k)\\
\leq&{{-}}\frac{1}{\beta}\sum_{i=1}^N\left(\frac{\partial f_i}{\partial \bm{x}_i}(\bm{x}(k+1)){{-}}\frac{\partial f_i}{\partial \bm{x}_i}(\bm{x}(k))\right)^T\left({\bm{x}}_i(k{{+}}1){{-}}{\bm{x}}_i(k)\right)\\
&{+}\sum_{i=1}^N\frac{1}{n_i}\bigg(\left\|L_i\frac{\partial f_i}{\partial \bm{x}_i}(\bm{x}(k+1)){{-}}L_i\frac{\partial f_i}{\partial \bm{x}_i}(\bm{x}(k))\right\|^2\\
&{+}\left\|L_i\left(\frac{\partial f_i}{\partial \bm{x}_i}(\bm{x}(k)){{-}}n_i\bar{\mathcal{Q}}_i(\bm{\xi}_i({k}))\right)\right\|^2{+}\left\|n_i\breve{L}_i\bm{e_{\psi_i}}(k)\right\|^2\bigg)\\
\leq&{{-}}\frac{\mu}{\beta}\left\|{\bm{x}}(k{{+}}1){{{-}}}{\bm{x}}(k)\right\|^2{+}\sum_{i=1}^N\frac{1}{n_i}\bigg(l_i^2\|L_i\|^2\left\|{\bm{x}}(k{{+}}1){{-}}{\bm{x}}(k)\right\|^2\\
&{+}(\sum_{j=1}^{n_i}l_{ij}^2)\left\|L_i\right\|^2 \big\|\bm{e_{\xi_i}}(k)\big\|^2
{+}n_i^2\|\breve{L}_i\|^2\left\|\bm{e_{\psi_i}}(k)\right\|^2\bigg)\\
\leq&{{-}}(\frac{\mu}{\beta}{{-}}\sum_{i=1}^N\frac{l_i^2\|L_i\|^2}{n_i})\beta^2\left\|\hat{L}\hat{\breve{L}}\bm{\psi}({k})\right\|^2\\
&{+}\max_{i\in\mathcal{I}}\Big\{\frac{1}{n_i}(\sum_{j=1}^{n_i}l_{ij}^2)\left\|L_i\right\|^2\Big\}\left\|\bm{e_\xi}(k)\right\|^2\\
&{+}\max_{i\in\mathcal{I}}\{n_i\|\breve{L}_i\|^2\}\left\|\bm{e_\psi}(k)\right\|^2,
\end{aligned}
\end{equation*}
which completes the proof.


%

\end{document}